\newtheorem{thm}{Theorem}
\newtheorem{theorem}{Theorem}[section]
\newtheorem{lemma}[theorem]{Lemma}
\newtheorem{claim}[theorem]{Claim}
\theoremstyle{definition}
\journal{the journal.}
\begin{document}
	
		\begin{frontmatter}
	
	\title{\textbf{Decompositions of graphs with degree constraints relative to prescribed subgraphs}}
	
	\author{Peichao Wei\corref{cor1}}
	\ead{wei229090@163.com}
	\address{Institute of Mathematics, School of Mathematical Sciences, Nanjing Normal University, 
		Nanjing, 210023, China}
	
	\author{Muhuo Liu\corref{cor2}}
	\ead{liumuhuo@163.com}
	\address{Department of Mathematics,  South China     Agricultural  University, Guangzhou, 510642, China}
	
	\author{Yang Wu\corref{cor3}}
	\ead{ywu@must.edu.mo}
	\address{School of Computer Science and Engineering,   Macau University of Science and Technology, Macau, China}
	
	\author{Zoran Stani\' c\corref{cor4}}
		\ead{zstanic@matf.bg.ac.rs}
		\address{Faculty of Mathematics, University of Belgrade,
			Studentski trg 16, 11 000 Belgrade, Serbia}
	
		\cortext[cor2]{Corresponding author}

%
%
	
	\begin{abstract}
Given a finite simple undirected graph $G$, let $T_1(G)$ denote the  subset of vertices of $G$ such that every vertex of $T_1(G)$ belongs to at least one subgraph isomorphic to a graph obtained by connecting a single vertex to two vertices of $K_4 - e$. Define  $T_0(G) = V(G) \setminus T_1(G)$, and let $a,b \colon V(G) \longrightarrow \mathbb{Z}_{\ge 0}$ be arbitrary functions. In this paper, we prove that if $d_G(u) \ge a(u) + b(u) + h(u)$, where $h(u) \in \{0,1\}$ for $u \in T_h(G)$, then there exists a partition $(S, T)$ of $V(G)$ such that $d_{S}(u) \ge a(u)$ for every $u \in S$ and $d_{T}(u) \ge b(u)$ for every $u \in T$. This result extends the theorem of Stiebitz~[\textit{J. Graph Theory}, 23  (1996), 321--324]. Moreover, we establish an analogous result in the case where $T_1(G)$ consists of vertices belonging to at least one $K_{2,3}$, thereby extending the findings of Hou et al.~[\textit{Discrete Math.}, 341 (2018), 3288--3295].
\end{abstract}

	\begin{keyword}
	Degree constraint; Vertex set partition; Prescribed subgraph; Stiebitz's theorem 
	
	\MSC[2020] 05C75\sep 05C69
\end{keyword}

\end{frontmatter}

\let\thefootnote\relax\footnotetext{\textit{ORCIDs}: \texttt{0009-0002-8309-9984} (P. Wei) \texttt{000-0001-8217-2452} (M. Liu), (Y. Wu), \texttt{0000-0002-4949-4203} (Z. Stani\' c)}

	 \section{Introduction}

	 All graphs considered in this paper are finite simple and undirected. For such a graph $G$, we denote its vertex set and edge set by $V(G)$ and $E(G)$, respectively. For a vertex $u \in V(G)$, let $N_{G}(u)$ and $d_{G}(u)$ denote the neighbourhood and degree of $u$, respectively; thus $N_{G}(u) = \{ v \in V(G) ~:~ uv \in E(G) \}$ and $d_{G}(u) = |N_{G}(u)|$. The minimum degree of $G$ is written as $\delta(G)$. For a vertex subset $X$ and a vertex $v$ of $G$, we define $d_X(u) = |N_G(u) \cap X|$. Moreover, $G[X]$ denotes the subgraph of $G$ induced by $X$. For a fixed graph $H$, $G$ is said to be \textit{$H$-free} if it does not contain $H$ as a subgraph (not necessarily induced). For terminology and notation not explicitly defined in this paper, we refer the reader to \cite{Diestel}.
	 
	 Many fundamental and applied problems in graph theory revolve around the concept of graph partitioning, where the vertex set of a graph is divided into parts satisfying prescribed structural or combinatorial properties; among these, one of the most extensively studied directions involves partitions subject to minimum degree constraints.~To provide a concise overview of results relevant to this study, we first introduce the following notions.

	Let $\mathbb{Z}_{\ge 0}$ denote the set of non-negative integers. For a function $f \colon V(G) \longrightarrow \mathbb{Z}_{\ge 0}$ defined on the vertex set of an arbitrary graph $G$, a subset $X \subseteq V(G)$ is said to be \emph{$f$-feasible} if $d_X(u) \ge f(u)$ for every $u \in X$. Similarly, given two functions $a,b \colon V(G) \longrightarrow \mathbb{Z}_{\ge 0}$, a pair $(A,B)$ of non-empty disjoint subsets of $V(G)$ is called \emph{$(a,b)$-feasible} if $A$ is $a$-feasible and $B$ is $b$-feasible. An $(a,b)$-feasible pair satisfying $A \cup B = V(G)$ is referred to as an \emph{$(a,b)$-feasible partition}.  
	
	Throughout this paper, $K_4-e$ denotes the graph obtained by deleting a single edge from the complete graph~$K_4$, while $K_{2,3}$ denotes the complete bipartite graph with partite sets of cardinalities $2$ and $3$, respectively.

For positive integers $s$ and $t$, let $g(s,t)$ denote the minimum integer such that 
every graph $G$ with minimum vertex degree $\delta(G) \ge g(s,t)$ admits a vertex partition 
$(S,T)$ satisfying $\delta(G[S]) \ge s$ and $\delta(G[T]) \ge t$. 
In 1983, Thomassen~\cite{Thomassen} conjectured that 
\[
g(s,t) = s + t + 1,
\]
and thirteen years later this conjecture was confirmed by Stiebitz~\cite{Sti}.

	 \begin{thm}[\cite{Sti}]\label{thm:A} For a graph $ G $ and $ a,b\colon V(G)\longrightarrow \mathbb{Z}_{\ge 0}$, 
	 	suppose that $ d_G(u) \ge a(u) + b(u) + 1 $ holds for every $ u  \in V (G) $. Then $ G $ has an $ (a,b) $-feasible partition.\end{thm}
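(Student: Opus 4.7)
My plan is to prove Theorem~\ref{thm:A} by contradiction using an extremal potential argument. Suppose $G$ satisfies the hypothesis but admits no $(a,b)$-feasible partition. Among all ordered partitions $(A,B)$ of $V(G)$ (with $A$ or $B$ possibly empty), choose one that minimises the total deficiency
\[
\Phi(A,B) \;=\; \sum_{u\in A}\bigl(a(u)-d_A(u)\bigr)^{+} \;+\; \sum_{u\in B}\bigl(b(u)-d_B(u)\bigr)^{+},
\]
and, subject to that, minimises the secondary quantity $|E(G[A])|+|E(G[B])|$. Since no $(a,b)$-feasible partition exists, $\Phi(A,B)\ge 1$, so without loss of generality there is a vertex $u\in A$ with $d_A(u)\le a(u)-1$. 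The hypothesis then yields $d_B(u)=d_G(u)-d_A(u)\ge b(u)+2$, so $u$ would carry surplus at least two on the $B$ side.

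Next I would analyse the single-vertex switch of $u$ from $A$ to $B$. A direct bookkeeping gives
\[
\Phi(A\setminus\{u\},B\cup\{u\})-\Phi(A,B) \;\le\; -1 + \alpha - \beta,
\]
where $\alpha=|\{v\in A\cap N(u):d_A(v)\le a(v)\}|$ and $\beta=|\{v\in B\cap N(u):d_B(v)<b(v)\}|$. Minimality of $\Phi$ forces $\alpha\ge \beta+1$, so $u$ has many \emph{tight} neighbours in $A$, and each such neighbour $v$ satisfies $d_B(v)\ge b(v)+1$ by the hypothesis. Starting from $\{u\}$ and iteratively adjoining tight neighbours within $A$ produces a subset $U\subseteq A$ closed under this tight-neighbour relation.

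The final step is to analyse the coordinated swap of $U$ from $A$ to $B$. Summing the hypothesis $d_G(v)\ge a(v)+b(v)+1$ over $v\in U$ together with
\[
\sum_{v\in U}d_G(v) \;=\; 2|E(G[U])|+|E(U,A\setminus U)|+|E(U,B)|,
\]
one obtains an inequality that, combined with the tightness bounds arising from the closure step, shows the coordinated swap either strictly decreases $\Phi$, or leaves $\Phi$ unchanged while strictly decreasing the secondary potential $|E(G[A])|+|E(G[B])|$. Either conclusion contradicts the extremal choice of $(A,B)$, completing the proof.

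The main obstacle is precisely this coordinated-swap step. A single vertex move need not decrease $\Phi$, so the argument must bundle the ``$+1$'' slack supplied by the hypothesis at each vertex of $U$ against the boundary edges $E(U,A\setminus U)$ and the internal edges $E(G[U])$ via a careful double count. The secondary minimisation is essential to exclude the degenerate case in which the swap merely rearranges internal edges without improving $\Phi$. Setting up the correct closure rule that defines $U$, and verifying that the double count genuinely improves at least one of the two potentials, constitute the technical heart of the proof.
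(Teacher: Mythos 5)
Your proposal has a genuine gap, and in fact two. First, the very first step is incorrect as stated: you allow $A$ or $B$ to be empty when minimising $\Phi$, and then assert that $\Phi(A,B)\ge 1$ because no $(a,b)$-feasible partition exists. But the trivial partition $(\emptyset,V(G))$ already has $\Phi=0$, since $d_G(u)\ge a(u)+b(u)+1\ge b(u)$ for every $u$; it fails to be an $(a,b)$-feasible partition only because both parts must be non-empty. This is not a cosmetic issue: the entire difficulty of Stiebitz's theorem is to prevent one side from swallowing everything. Indeed, with the weight $\omega(X_1,X_2)=e(X_1)+e(X_2)+\sum_{X_1}b+\sum_{X_2}a$ used in Section~\ref{sec:prep}, moving any deficient vertex across strictly increases $\omega$, so a naive local-move process terminates easily --- but possibly at a partition with an empty part. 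A deficiency potential such as your $\Phi$ cannot see this obstruction, so minimising it proves nothing by itself.

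Second, even if you restrict to partitions with both parts non-empty, the decisive step --- the ``coordinated swap'' of the closure set $U$ --- is not carried out, and your own text concedes this. The difficulty is real: moving $U$ to $B$ leaves every vertex of $U$ satisfied (each has $d_B(v)\ge b(v)+1$) and cannot hurt the old $B$-vertices, but a single non-tight neighbour $x\in A\setminus U$ with $d_A(x)=a(x)+1$ may lose arbitrarily many edges to $U$, so the $A$-side deficiency can jump by up to $|E(U,A\setminus U)|$, while the hypothesis supplies only one unit of slack per vertex of $U$; the proposed double count does not close this. The known route --- Stiebitz's original argument, which the present paper's framework reproduces --- is structurally different: it first reduces feasible partitions to feasible \emph{pairs} (Lemma~\ref{21l}), so one may discard troublesome vertices rather than relocate them; it then fixes $a,b$ extremal (maximising $\sum(a+b)$, forcing the exact degree identity of Claim~\ref{21c}), and analyses degenerate partitions built from minimal good subsets under the weight $\omega$, using exchanges as in \eqref{23a}--\eqref{25a}. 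That machinery exists precisely to resolve the deadlock your sketch runs into, so the missing step is not a technicality but the heart of the theorem.
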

	  	 
	 In subsequent work, a series of Stiebitz-type results has appeared. The degree condition in Theorem~\ref{thm:A} can be relaxed when we impose restrictions on the (local) structure of the target graphs. 
	 For instance, Kaneko~\cite{Kaneko} showed that 
	 $
	 g(s,t) \le s + t
	 $
	 holds whenever $G$ is triangle-free. Next, Hou et al.~\cite{Hou} established the following results.

	 \begin{thm}[\cite{Hou}]\label{thm:B} For a $ K_{2,3} $-free graph $G$ and $ a,b\colon V(G)\longrightarrow \mathbb{Z}_{\geq 1}$, if $ d_G(u) \ge  a(u) + b(u) $ holds for every
	 	$ u \in  V(G)$, then $ G $ admits an $ (a, b) $-feasible partition.
	 \end{thm}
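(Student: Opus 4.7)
The plan is to argue by contradiction. Suppose $G$ is a $K_{2,3}$-free graph satisfying $d_G(u) \ge a(u)+b(u)$ with $a(u), b(u) \ge 1$ for every $u \in V(G)$, and suppose $G$ admits no $(a,b)$-feasible partition. First I would introduce the family $\mathcal{F}$ of all ordered partitions $(A, B)$ of $V(G)$ into nonempty parts for which $A$ is $a$-feasible. This family is nonempty: for any fixed $v \in V(G)$, the partition $(V(G) \setminus \{v\}, \{v\})$ lies in $\mathcal{F}$, since each $w \neq v$ satisfies $d_{V(G) \setminus \{v\}}(w) \ge d_G(w) - 1 \ge a(w) + b(w) - 1 \ge a(w)$. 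Within $\mathcal{F}$ I would select a pair $(A, B)$ that minimizes $\mathrm{def}(B) := |\{u \in B \colon d_B(u) < b(u)\}|$ (the count of $b$-deficient vertices in $B$) and, subject to this, maximizes $|A|$.

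If $\mathrm{def}(B) = 0$, then $(A, B)$ is $(a,b)$-feasible, contradicting the assumption. So pick $v \in B$ with $d_B(v) < b(v)$; the degree bound forces $d_A(v) \ge a(v) + 1$. Moving $v$ from $B$ to $A$ yields a new member of $\mathcal{F}$ (assuming $|B| \ge 2$), so by extremality this move cannot strictly decrease $\mathrm{def}$, nor at equal $\mathrm{def}$ strictly increase $|A|$. The only way $\mathrm{def}$ fails to drop after removing $v$ is if each tight neighbor $u \in B \cap N(v)$ with $d_B(u) = b(u)$ becomes newly deficient in $B \setminus \{v\}$; extremality therefore forces at least one such tight $u$ to exist. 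An analogous analysis of the tentative move of a tight $u$ into $A$ (permitted because $d_A(u) = d_G(u) - b(u) \ge a(u)$, so $A \cup \{u\}$ remains $a$-feasible) shows that $u$ itself must have a further tight neighbor in $B \setminus \{v\}$. Iterating produces a rigid local configuration of tight vertices around $v$, together with structural constraints on their $A$-neighborhoods.

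The main obstacle is converting this local rigidity into a $K_{2,3}$. Since $K_{2,3}$-freeness is equivalent to the statement that any two vertices share at most two common neighbors, the goal is to exhibit a pair with three. I would examine either $v$ together with a tight $B$-neighbor $u$, or two tight $B$-neighbors $u, u'$ of $v$, and combine single moves with simultaneous swaps (pulling $v$ and a tight $u$ into $A$ while pushing a suitable vertex of $A$ into $B$) to further constrain the common neighborhood of the selected pair. The delicate counting then combines the strict surplus $d_A(v) \ge a(v) + 1$, the tightness equality $d_B(u) = b(u)$, and the inequalities $a, b \ge 1$ to force three common neighbors, yielding the forbidden $K_{2,3}$. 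The edge case $|B| = 1$ requires a separate swap argument: if $B = \{v\}$, one exchanges $v$ with a carefully chosen vertex of $A$, and the resulting partition either directly gives an $(a,b)$-feasible split or reduces to the situation $|B| \ge 2$. In this way the proof trades the arithmetic slack present in Stiebitz's theorem for the structural slack afforded by $K_{2,3}$-freeness.
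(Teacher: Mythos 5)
Your proposal sets up a plausible extremal framework, but it stops exactly where the theorem actually lives. Everything up to ``rigid local configuration'' is routine bookkeeping: a deficient vertex $v\in B$ has $d_A(v)\ge a(v)+1$, and extremality of $(\mathrm{def}(B),|A|)$ forces tight neighbours of any movable vertex. (Even here the counting is slightly off: removing the deficient $v$ itself lowers the deficiency count by one, so to block both a drop in $\mathrm{def}$ and a rise in $|A|$ you need at least \emph{two} tight neighbours of $v$ in $B$, not one.) The decisive step --- turning this configuration into two vertices with three common neighbours --- is only announced (``delicate counting then \dots forces three common neighbors''), not carried out, and it is far from automatic: a deficient vertex with a couple of tight neighbours, each having further tight neighbours, does not in any evident way produce a $K_{2,3}$, and the structural information your potential function extracts (a deficiency count plus $|A|$) is much coarser than what the known arguments use. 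The $|B|=1$ case is likewise waved away with ``a carefully chosen vertex of $A$''; in this paper the analogous situation ($|X_2|=1$ in Claim~3.4) already requires a substantial structural analysis, so it cannot be dismissed in one sentence. As written, the proposal is a plan, not a proof.

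For context, note that the paper itself does not prove this statement: Theorem~B is quoted from Hou et al.\ and is even used as a black box (the proof of Theorem~1.1 begins by assuming $T_0\neq\emptyset$ and $T_1\neq\emptyset$ ``in light of Theorems B and C''). The machinery the paper does develop for its own theorem --- and which Stiebitz-type proofs, including Hou et al.'s, rely on --- is finer than yours: one maximizes $\sum(a(u)+b(u))$ subject to non-existence of feasible pairs, works with the weight $\omega(X_1,X_2)=e(X_1)+e(X_2)+\sum_{u\in X_1}b(u)+\sum_{v\in X_2}a(v)$ over \emph{degenerate} partitions, and uses the good/meager subset dichotomy together with Lemma~2.1 (feasible pair $\Leftrightarrow$ feasible partition) to manufacture feasible pairs after vertex exchanges; the forbidden subgraph then emerges from precise degree equalities such as $d_{X_1}(u)=a(u)$ and $d_{X_2}(v)=b(v)+h(v)-1$. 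If you want to complete your route, you would need to replace the vague iteration by an argument of comparable precision, and at present that core argument is missing.
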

	
	\begin{thm}[\cite{Hou}]\label{thm:C} For a graph $ G $  with at least five vertices and  $ a,b\colon V(G)\longrightarrow \mathbb{Z}_{\geq 0}$, suppose that $ G $ has no subgraph
		obtained by connecting a single vertex to exactly two vertices of $ K_{4} -e $  and  $ d_G(u) \ge  a(u) + b(u) $ holds for every
		$ u \in  V(G)$. Then $ G $ admits an $ (a, b) $-feasible partition.
		\end{thm}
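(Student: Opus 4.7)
The plan is to proceed by contradiction using a minimum-deficiency partition. Assume that $G$ satisfies the hypotheses of Theorem~\ref{thm:C} but has no $(a,b)$-feasible partition, and among all partitions $(S,T)$ of $V(G)$ choose one minimizing
$$
\phi(S,T)=\sum_{u\in S}\bigl(a(u)-d_S(u)\bigr)^{+}+\sum_{u\in T}\bigl(b(u)-d_T(u)\bigr)^{+},
$$
where $x^{+}:=\max\{0,x\}$. Since no feasible partition exists, $\phi(S,T)\ge 1$ and, after possibly swapping the roles of $S$ and $T$, there is a vertex $v_0\in S$ with $d_S(v_0)<a(v_0)$. Setting $k:=a(v_0)-d_S(v_0)\ge 1$, the degree hypothesis forces $d_T(v_0)\ge b(v_0)+k$, so $v_0$ has surplus degree on the $T$-side.

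Next, I would exploit the extremality of $\phi$ through local exchanges. A direct calculation gives
$$
\phi\bigl(S\setminus\{v_0\},T\cup\{v_0\}\bigr)-\phi(S,T)=-k+|N_S^{\ast}(v_0)|-|N_T^{\ast}(v_0)|,
$$
where $N_S^{\ast}(v_0)=\{u\in N_S(v_0):d_S(u)\le a(u)\}$ and $N_T^{\ast}(v_0)=\{u\in N_T(v_0):d_T(u)<b(u)\}$. Minimality forces $|N_S^{\ast}(v_0)|\ge k+|N_T^{\ast}(v_0)|$, so $v_0$ has at least one tight neighbour $u_1\in S$. Applying the same analysis to composite moves — relocating $\{v_0,u_1\}$ jointly to $T$, or swapping $v_0$ with a deficient vertex $w\in N_T^{\ast}(v_0)$ — each non-improvement inequality encodes an additional adjacency among the neighbours of $v_0$ and progressively builds a dense local substructure on its tight $S$-neighbours.

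The technical crux, and the step I expect to be most delicate, is converting these adjacency constraints into an explicit copy of the forbidden graph $H$ (a $K_4-e$ together with a vertex joined to exactly two of its vertices). I would split into cases depending on whether $|N_S^{\ast}(v_0)|\ge 3$ or $|N_S^{\ast}(v_0)|\in\{1,2\}$. In the first case, iterating the exchange inequalities should force three pairwise adjacent tight neighbours of $v_0$ to share a fourth common neighbour, producing a $K_4-e$ with $v_0$ as the pendant attached to exactly two of its four vertices. In the second case, a vertex drawn from $N_T^{\ast}(v_0)$ — whose existence is guaranteed by the surplus $d_T(v_0)\ge b(v_0)+k$ — supplies the missing structural vertex, and the hypothesis $|V(G)|\ge 5$ ensures that the five vertices constructed are genuinely distinct. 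In either case, the resulting subgraph contradicts the $H$-freeness of $G$, completing the proof.
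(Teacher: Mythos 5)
The paper does not prove Theorem~\ref{thm:C} at all --- it is imported from Hou et al.\ --- and the machinery the paper develops for its generalization (Theorem~\ref{13t}) is the Stiebitz-type framework: maximize $\sum_{u}(a(u)+b(u))$ subject to the nonexistence of $(a,b)$-feasible pairs, work with $2$-good and meager sets and degenerate partitions, maximize the weight $\omega(X_1,X_2)=e(X_1)+e(X_2)+\sum_{u\in X_1}b(u)+\sum_{v\in X_2}a(v)$, and, as a secondary criterion, minimize $|X_1|$. Your single-potential deficiency minimization is a genuinely different framework, but as written it is a plan rather than a proof, so it cannot be certified as a correct alternative route.

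The gap is exactly where you flag it, and it is not a routine verification. The non-improvement inequalities for $\phi$ only yield counting statements about deficient vertices (e.g.\ $|N_S^{\ast}(v_0)|\ge k+|N_T^{\ast}(v_0)|$); they do not by themselves produce any edges among the tight neighbours of $v_0$, and the assertion that composite moves ``should force three pairwise adjacent tight neighbours of $v_0$ to share a fourth common neighbour'' is never derived. Moving $\{v_0,u_1\}$ jointly to $T$, for instance, gives an inequality that can be satisfied without encoding any adjacency (the tight $S$-neighbours of $u_1$ may simply be numerous), which is precisely why the published proofs need exact-degree normalizations (after maximizing $\sum(a+b)$ one gets $d_G(u)=a(u)+b(u)$, and vertices with $a(u)=0$ or $b(u)=0$ must be treated separately) and a second optimization criterion to extract local structure. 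Moreover, one step of your sketch is concretely false: in the case $|N_S^{\ast}(v_0)|\in\{1,2\}$ you take a vertex of $N_T^{\ast}(v_0)$ ``whose existence is guaranteed by the surplus $d_T(v_0)\ge b(v_0)+k$''. The surplus guarantees many $T$-neighbours of $v_0$, not deficient ones; in fact your own minimality inequality gives $|N_T^{\ast}(v_0)|\le|N_S^{\ast}(v_0)|-k$, which can be $0$, so the ``missing structural vertex'' need not exist. Until the forbidden configuration ($K_4-e$ together with a vertex joined to two of its vertices) is actually constructed from explicit exchange inequalities, and these degeneracies are handled, the argument has a genuine hole at its central step.
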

	 
These results are followed by additional findings that, while less central to the focus of the present paper, provide useful insights into related problems; in the following, we highlight the following.

In 2000, Diwan~\cite{Diwan} proved that 
$
g(s,t) \le s + t - 1
$ 
holds for graphs with girth at least five whenever $\min\{s,t\} \ge 2$. Gerber and Kobler~\cite{Kobler} generalized Diwan's result to a specific function-type setting, not elaborated here. 
Another noteworthy contribution is due to Bazgan, Tuza and Vanderpooten~\cite{Vander}, who presented polynomial-time algorithms to find $(a,b)$-feasible partitions satisfying the results of Stiebitz, Kaneko and Diwan, respectively. Ma et al.~\cite{Ma} further extended Diwan's result by showing that 
$
g(s,t) \le s + t - 1
$ 
holds for $C_4$-free graphs whenever $\min\{s,t\} \ge 2$. 
Recently, Zeng and Zu~\cite{Zu} proved that a graph in which quadrangles do not share edges with triangles and other quadrangles admits an  $ (a,b) $-feasible partition whenever $ d_{G}(u)\geqslant a(u)+b(u)-1 $ holds for every  $ u\in V(G) $.

We proceed with a slightly extended  approach introduced by Furuya et al.~\cite{Fu}. Let $ J(G) $  be the set of pairs  $ (u_{1}u_{2}\ldots u_{s},v_{1}v_{2}\ldots v_{t}) $  of vertex sequences of $ G $  such that (a) $ \{u_{i}:1\leq i\leq s\}\neq\{v_{i}:1\leq i\leq t\} $, (b) $s\in\{3,4\} $  and  $t\in\{3,4\} $, (c) $ (u_{1},u_{2})=(v_{1},v_{2}) $ and (d) each sequence induces a  cycle. Let  $S_{1}(G)=\{u_{1},u_{2}~:~(u_{1}u_{2}\ldots u_{m},v_{1}v_{2}\ldots v_{n})\in J(G)\} $  and  $ S_{0}(G)=V(G)\setminus S_{1}(G) $.

\begin{thm}[\cite{Fu}]\label{thm:D}
		For a graph $ G $  and  $ a,b\colon V(G)\longrightarrow \mathbb{Z}_{\geq 0}$, suppose that $ d_G(u)\ge a(u)+b(u)-1+2h(u)$ and  $\min \left\lbrace a(u),b(u)\right\rbrace\ge 2(1-h(u))$ for $u\in S_{h(u)}(G)$, with $ h(u)\in \left\lbrace 0,1\right\rbrace  $. Then $G$ admits an $ (a,b) $-feasible partition.
\end{thm}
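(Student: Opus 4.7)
The plan is to proceed by contradiction, using an extremal partition argument in the spirit of Stiebitz's proof of Theorem~\ref{thm:A}. Assume $G$ satisfies the hypotheses of Theorem~\ref{thm:D} but admits no $(a,b)$-feasible partition. Among all ordered bipartitions $(A,B)$ of $V(G)$, select one that minimizes the \emph{total deficiency}
\[
W(A,B) = \sum_{u\in A}\max\{0,\,a(u)-d_A(u)\} + \sum_{u\in B}\max\{0,\,b(u)-d_B(u)\},
\]
breaking ties lexicographically by a structural secondary criterion, for instance the number of triangles and $4$-cycles of $G$ lying entirely inside $G[A]$ or $G[B]$, together with the number of vertices violating their local constraint.

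Since no feasible partition exists, $W(A,B)\ge 1$, so some vertex violates its constraint. Interchanging $A$ and $B$ if necessary, we may assume that $u\in B$ with $b(u)-d_B(u)\ge 1$. The degree hypothesis then gives $d_A(u)=d_G(u)-d_B(u)\ge a(u)-1+2h(u)$. In the case $h(u)=1$ (so $u\in S_1(G)$), we have $d_A(u)\ge a(u)+1$ and are in the Stiebitz regime: a single swap of $u$ from $B$ to $A$, possibly followed by compensating swaps at newly violating vertices, must eventually decrease $W$, contradicting minimality. The argument in this case essentially replicates Theorem~\ref{thm:A}, since the active condition $d_G(u)\ge a(u)+b(u)+1$ holds for $u$.

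The substantive case is $h(u)=0$, where $d_A(u)$ may drop to $a(u)-1$ and a single swap of $u$ can actually increase $W$. Using the hypothesis $\min\{a(u),b(u)\}\ge 2$ to guarantee that $u$ has at least two candidate partners in each part, one considers coordinated two-vertex exchanges: swap $u$ together with a well-chosen neighbour $v$ so that the degree losses at $u$ and at $v$ cancel. The core claim is that if no such coordinated exchange, and no alternating modification along a short path through $u$, strictly improves $(W,\text{secondary})$, then from the local neighbourhood of some edge $uv$ one can extract two distinct cycles of length $3$ or $4$ sharing the oriented edge $uv$ but having distinct vertex sets. This exhibits a pair of cycle sequences lying in $J(G)$ with $u_1=u$ and $u_2=v$, forcing $u\in S_1(G)$ and contradicting $h(u)=0$.

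The principal obstacle is this final extraction of a $J(G)$-witness from a failed two-vertex swap. One must classify the obstructions to a local improvement and show that each one forces a pair of short cycles through the shared oriented edge $uv$ with distinct vertex sets; the restriction $s,t\in\{3,4\}$ in the definition of $J(G)$ corresponds precisely to the distance-two reach of a two-vertex analysis, and the bound $\min\{a(u),b(u)\}\ge 2$ is exactly what prevents the alternating structures around $u$ from degenerating. Choosing the secondary criterion so that failure to improve it, at fixed $W$, pins the obstruction down to cycles of length at most $4$ is what converts the algebraic failure of a swap into a combinatorial witness in $J(G)$, and is the most delicate step of the proof.
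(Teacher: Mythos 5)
This statement is quoted from Furuya and Maezawa~\cite{Fu}; the paper does not prove it, so your attempt can only be judged on its own terms, and as it stands it is a strategy outline rather than a proof. The decisive step is missing exactly where you say it is: in the case $h(u)=0$ you must show that if no coordinated two-vertex exchange and no short alternating modification improves your potential, then the local structure around some edge $uv$ contains \emph{two} cycles of length $3$ or $4$ sharing the ordered edge $uv$ but with distinct vertex sets, i.e.\ a genuine $J(G)$-witness with $u$ among the first two vertices (membership in $S_1(G)$ is restricted to those two vertices, so cycles merely passing near $u$ do not suffice). This classification of obstructions is the entire content of the theorem, and your proposal explicitly defers it (``the most delicate step'') without carrying it out, so no contradiction with $h(u)=0$ is actually derived.

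There is also a gap in the case $h(u)=1$. You assert that a swap of $u$, ``possibly followed by compensating swaps at newly violating vertices, must eventually decrease $W$,'' because $u$ satisfies the Stiebitz-type bound $d_G(u)\ge a(u)+b(u)+1$. But Theorem~\ref{thm:A} needs that bound at \emph{every} vertex; the vertices disturbed by your cascade may lie in $S_0(G)$, where only $d_G\ge a+b-1$ holds, and then a compensating swap can increase the deficiency, so neither termination nor monotone decrease of $W$ is guaranteed. This is precisely why the established arguments (Stiebitz's proof, the framework in Section~\ref{sec:prep} of this paper, and the proof in~\cite{Fu}) do not minimize raw deficiency: they use Lemma~\ref{21l} (feasible pair versus feasible partition), maximize $\sum_{u}(a(u)+b(u))$, and work with a weight of the form $\omega(X_1,X_2)=e(X_1)+e(X_2)+\sum_{u\in X_1}b(u)+\sum_{v\in X_2}a(v)$ over degenerate partitions built from meager and good sets, so that each vertex move changes the potential by an explicitly computable amount as in \eqref{23a}--\eqref{25a}. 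Recasting your argument in such a framework, and then actually performing the structural extraction of the $J(G)$-witness, is what would be needed to turn the sketch into a proof.
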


By analyzing the preceding theorem, one may expect that a graph $G$ admits an $(a,b)$-feasible partition 
whenever the vertices associated with a given subgraph have comparatively large degrees. 
This observation motivates the main result of the present work.

The following notation will be used throughout the remainder of the paper without further explicit reference.
 If $H$ is a prescribed subgraph of $G$,  let $T_1(G)$ denote the subset of $V(G)$ such that every vertex of $T_1(G)$ belongs to at least one subgraph isomorphic to $H$. Set $T_0(G) = V(G) \setminus T_1(G)$.

\begin{theorem}\label{13t}
	For a graph $ G $  with at least five vertices and $ a,b\colon V(G)\longrightarrow \mathbb{Z}_{\ge 0}$, suppose that  $ d_G(u)\ge a(u)+b(u)+h(u)$ holds  for $u\in T_{h(u)}(G)$, with $ h(u)\in \left\lbrace 0,1\right\rbrace$. Then $G$ admits an  $(a,b)$-feasible partition whenever one of the following holds:
	\begin{itemize} \item[(i)] $\min \left\lbrace a(u),b(u)\right\rbrace\ge 1-h(u)$ for $u\in T_{h(u)}(G)$ and $H$ is  obtained   by connecting a single vertex to exactly two vertices of $ K_{4} -e $;
		\item[(ii)] $\min \left\lbrace a(u),b(u)\right\rbrace\ge 2-h(u)$ for $u\in T_{h(u)}(G)$  and  $H\cong K_{2,3}$.
		\end{itemize}
\end{theorem}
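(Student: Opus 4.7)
The plan is to argue by contradiction using a Stiebitz-style extremal partition argument, adapted to exploit the partition $V(G) = T_0(G) \cup T_1(G)$. Assume a minimum counterexample $(G, a, b)$ with respect to $|V(G)| + |E(G)|$, and among all partitions $(A, B)$ of $V(G)$ with $A, B \neq \emptyset$, choose one minimizing the weighted edge potential
\[
\Phi(A, B) := \sum_{v \in A} b(v)\, d_A(v) + \sum_{v \in B} a(v)\, d_B(v),
\]
with lexicographic tie-breaking. Since $(A, B)$ is not $(a, b)$-feasible, by the symmetry between the two parts one may assume that some $v \in A$ satisfies $d_A(v) \le a(v) - 1$; the degree hypothesis then forces $d_B(v) \ge b(v) + h(v) + 1$.

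The first step is to apply the minimality of $\Phi$ against several reconfigurations: the one-vertex move of $v$ from $A$ to $B$, analogous single-vertex moves of selected neighbors of $v$, and swap operations involving $v$ jointly with a neighbor. Each yields an inequality $\Delta\Phi \ge 0$ which, combined with the lower bound on $d_G(v)$, tightly constrains the local structure at $v$. Using the extra assumption $\min\{a, b\} \ge 1-h$ in case~(i) (resp.\ $\ge 2-h$ in case~(ii)) to control the contributions of neighbors that lie in $T_0(G)$, one deduces that many vertices of $N_A(v)$ carry substantial $b$-weight and share significant common neighborhoods, both inside $A$ and across the cut.

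The crucial combinatorial step is to extract a copy of $H$ through $v$ from this data. For case~(i), one argues that $N_A(v)$ must contain two vertices $u_1, u_2$ together with two further vertices of $V(G)$ that span a $K_4 - e$, with $v$ adjacent to exactly two of those four vertices; this realizes $H$ with $v$ as the pendant. For case~(ii), one instead locates a pair of vertices in $N(v)$ sharing $v$ and two further common neighbors, realizing a $K_{2,3}$ through $v$. In either case $v \in T_1(G)$, so in fact $h(v) = 1$ and $d_G(v) \ge a(v) + b(v) + 1$; this extra unit of degree, plugged back into the one-vertex move inequality, forces $\Delta\Phi < 0$, contradicting the minimality of $\Phi$.

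The main obstacle is the final combinatorial step, namely the explicit extraction of a copy of $H$ from the inequalities produced by minimality. This is especially delicate in case~(ii): the witnessing $K_{2,3}$ may have its parts split between $A$ and $B$, so one must combine pigeonhole on the neighborhoods across the cut with the tighter lower bound $\min\{a, b\} \ge 2-h$ to produce two vertices sharing three common neighbors, while ensuring the configuration actually passes through $v$. A careful enumeration of local configurations, showing that any failure of $\Phi$ to strictly decrease under the above reconfigurations forces $H$ to appear at $v$, is the heart of the argument.
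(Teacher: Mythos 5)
Your plan has a genuine gap at exactly the point you yourself flag as ``the heart of the argument'': the claim that the minimality of $\Phi$ forces an explicit copy of $H$ through the deficient vertex $v$ is asserted, not proved, and it is in fact the entire content of the theorem. Worse, the claim is aimed in a direction that the actual structure of the problem does not support. In the paper's argument the deficient vertices end up lying in $T_0$, i.e.\ in \emph{no} copy of $H$, and the contradiction is obtained not by exhibiting $H$ at a single deficient vertex but by exploiting that absence: since a $T_0$ vertex cannot be the center of the forbidden configuration, the deficient vertices of $B_1(X_1)$ can have only very few adjacencies among themselves and at most one common ``rich'' neighbour, and this sparsity is what allows one to move two or three of them across the cut and exhibit an $(a,b)$-feasible pair. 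A deficient vertex has, by definition, few neighbours on its own side, so ``density near $v$'' is not what the reconfiguration inequalities produce; the relevant copies of $H$ (when they would arise) span several deficient vertices and their common neighbours, and ruling them out is used to \emph{sparsify}, not to locate $H$ at $v$. Your dichotomy (either improve the partition or find $H$ through $v$) would have to be proved vertex-configuration by vertex-configuration, and nothing in the proposal does this.

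There are also structural omissions that make the surrounding framework too weak even before that step. The single-partition potential argument with one-vertex moves and swaps does not, on its own, recover even Stiebitz's original theorem; the known proofs (and this paper) need the reduction from feasible \emph{partitions} to feasible \emph{pairs} (the paper's Lemma~\ref{21l}), the maximization of $\sum_{u}(a(u)+b(u))$ to force the exact degree identity $d_G(u)=a(u)+b(u)+h(u)$ (Claim~\ref{21c}), and the good/meager/degenerate-partition machinery with the weight $\omega$ maximized and $|X_1|$ minimized, which is what makes the exchange inequalities \eqref{23a}--\eqref{25a} usable. Your potential $\Phi(A,B)=\sum_{v\in A}b(v)d_A(v)+\sum_{v\in B}a(v)d_B(v)$ has no clean exchange formula, and the final step ``$v\in T_1$ gives one extra unit of degree, hence $\Delta\Phi<0$'' does not follow: in the paper, the extra unit for $T_1$ vertices is cashed in only at the very end (via Claim~\ref{29c}, where $h(v)=1$ turns a deficient vertex of $B_2(X_2)$ into a non-deficient one), after all of the above machinery has pinned down exact in- and cross-degrees of the deficient vertices. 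As it stands, the proposal is a plausible-sounding outline whose decisive combinatorial content is missing.
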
	

Evidently, Theorem~\ref{13t} extends the result of Theorem~\ref{thm:A}, as it requires a weaker initial assumption, $
d_G(u) \ge a(u) + b(u) + h(u)$
instead of
$
d_G(u) \ge a(u) + b(u) + 1,
$
while introducing an additional condition specified in~(i) or~(ii). Moreover, part~(i) (resp.~part~(ii)) permits the existence of subgraphs isomorphic to a single vertex joined by two edges to $K_4 - e$ ($K_{2,3}$), thereby extending the results of Theorem~\ref{thm:C} (Theorem~\ref{thm:B}). This highlights how Theorem~\ref{13t} compares to the previous results by  relaxing the degree requirement and allowing specific subgraph configurations that were not covered in earlier theorems. 

The initial framework, together with relevant results, notions and notation, is presented in Section~\ref{sec:prep}. The proof of Theorem~\ref{13t} is provided in Section~\ref{sec:proof}.

	\section{Preparation}\label{sec:prep}

Everything stated in this section will be used in the next one. In particular, the initial assumptions given after  the next lemma remain in force throughout the remainder of the paper. 

\begin{lemma}[\cite{Sti}]\label{21l}
	For a graph $ G $ and  $ a,b\colon V(G)\longrightarrow \mathbb{Z}_{\ge 0}$, $ G $ admits an $ (a,b) $-feasible partition if and only if $ G $ admits an $ (a,b) $-feasible pair.
\end{lemma}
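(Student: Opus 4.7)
The ``only if'' direction is immediate from the definitions: any $(a,b)$-feasible partition $(A,B)$ is, in particular, an $(a,b)$-feasible pair, since it satisfies all the required conditions (non-empty disjoint subsets, $a$-feasibility of $A$ and $b$-feasibility of $B$). The substance of the lemma lies in the reverse implication, namely that every $(a,b)$-feasible pair can be extended to an $(a,b)$-feasible partition.

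To prove this, I would proceed by an extremal argument. Among all $(a,b)$-feasible pairs of $G$, select $(A,B)$ maximizing $|A|+|B|$, and aim to show that $A \cup B = V(G)$. Suppose for contradiction that $W := V(G) \setminus (A \cup B)$ is non-empty, and fix $v \in W$. Observe that enlarging $A$ by $v$ only increases $d_A(u)$ for any $u \in A$, so the $a$-feasibility of the original vertices of $A$ is preserved in $A \cup \{v\}$; hence the only possible obstruction to $(A \cup \{v\}, B)$ being an $(a,b)$-feasible pair is at the vertex $v$ itself. Maximality of $|A|+|B|$ therefore forces $d_A(v) \le a(v)-1$, and the symmetric argument with $B$ gives $d_B(v) \le b(v)-1$.

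The key step, which I expect to be the main obstacle, is to derive a contradiction from the two inequalities above. Combining them with $d_G(v) = d_A(v) + d_B(v) + d_W(v)$ yields $d_W(v) \ge d_G(v) - a(v) - b(v) + 2$, so any lower bound on $d_G(v)$ translates into a substantial neighbourhood of $v$ inside $W$. In the settings where the lemma is applied throughout this paper, the ambient degree hypotheses of Theorem~\ref{13t} supply precisely such a bound, and one can then enlarge the pair by simultaneously transferring $v$, together with a carefully chosen collection of its neighbours in $W$, into $A$ or $B$ in a way that preserves feasibility for every transferred vertex. The resulting enlarged pair contradicts the choice of $(A,B)$, completing the argument.
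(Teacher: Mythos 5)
Your setup is correct and is exactly how the standard (Stiebitz) argument begins: the ``only if'' direction is trivial, and the extremal choice of a pair $(A,B)$ maximizing $|A|+|B|$ does force $d_A(v)\le a(v)-1$ and $d_B(v)\le b(v)-1$ for every $v\in W=V(G)\setminus(A\cup B)$. But the ``key step'' you defer is precisely where your text stops being a proof, and the route you sketch for it is misdirected. Transferring $v$ ``together with a carefully chosen collection of its neighbours in $W$'' does not obviously work: every $W$-neighbour $w$ you move into $A$ satisfies $d_A(w)\le a(w)-1$ by the same maximality argument, so each transferred vertex needs its own deficit covered by further transferred vertices, and you specify no scheme, invariant, or termination argument for this cascade. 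Moreover, the inequality you extract, $d_W(v)\ge d_G(v)-a(v)-b(v)+2\ (\ge 2$ under the paper's hypotheses$)$, is the wrong quantity: it only says $G[W]$ has minimum degree at least $2$, which by itself yields no contradiction.

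The correct completion is global and one line long: move \emph{all} of $W$ into $A$. For $u\in A$, degrees only increase; for $v\in W$, $d_{A\cup W}(v)=d_G(v)-d_B(v)\ge d_G(v)-b(v)+1\ge a(v)$, using your bound $d_B(v)\le b(v)-1$ together with $d_G(v)\ge a(v)+b(v)-1$, which the paper's standing assumption $d_G(u)\ge a(u)+b(u)+h(u)$ supplies. Hence $(A\cup W,B)$ is itself an $(a,b)$-feasible partition; equivalently, Stiebitz takes a maximal $a$-feasible set $S$ with $A\subseteq S\subseteq V(G)\setminus B$ and $T=V(G)\setminus S$. (The paper gives no proof of Lemma~\ref{21l}, citing \cite{Sti}, so this is the proof to compare against.) Note also a point you only half-register: some degree hypothesis is genuinely necessary, as the lemma is false for arbitrary $a,b$. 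In $K_{1,3}$ with centre $c$ and leaves $x,y,z$, put $a=b=0$ on the leaves and $a(c)=b(c)=4$; then $(\{x\},\{y\})$ is an $(a,b)$-feasible pair, but no $(a,b)$-feasible partition exists since $d_G(c)=3<4$ prevents $c$ from lying in either part. So your appeal to the ambient degree bound is essential, but it must be applied to $d_G(v)-d_B(v)$ (or $d_G(v)-d_A(v)$), not to $d_W(v)$.
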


Henceforth, we assume that $ G $ has no $ (a,b) $-feasible partition, and therefore  
$ G $ has no $ (a,b) $-feasible pair, by the previous lemma. Next, we choose $ a $ and $ b $ so that $$\sum_{u\in V(G)}\big(a(u)+b(u)\big)$$ is as large as possible, subject to the non-existence of $(a,b)$-feasible pairs. For the sake of simplicity, for  $ h\in\{0,1\} $, we set $ T_h\coloneqq T_h(G) $. 

We proceed with the following notation and notions.

\begin{itemize}
	\item For $ X \subseteq V(G) $, let $ B_1(X)=\left\lbrace u\in X~:~d_{X}(u)\le a(u)\right\rbrace  $ and $ B_2(X)=\left\lbrace u\in X~:~d_{X}(u)\le b(u)+h(u)-1\right\rbrace  $.
	\item For $ i\in\left\lbrace 1,2\right\rbrace$, a non-empty subset $ X_i $ of $ V(G) $ is called \textit{$i$-good} if  $B_i(X_i)=\emptyset$.
	
	\item   For $ i\in\left\lbrace 1,2\right\rbrace  $, a non-empty subset $ X_i $ of $ V(G) $ is called \textit{$i$-meager} if $ B_i(X')\neq\emptyset $ holds for every non-empty subset $ X' $ of $ X_i$.
	\item A partition $ (X_1,X_2) $ of $ V(G) $ is called \textit{degenerate} if $ X_i $ is  $i$-meager for each $ i\in\{1,2\} $.
	
	\item A non-empty subset $ X $ of $ V(G) $ is called \textit{$ a$-degenerate}  if  for every non-empty subset $ X' $ of $ X $  there exists a vertex $ x\in X' $ such that $ d_{X'}(x)\le a(x) $.
	
	\item A non-empty subset $ X $ of $ V(G) $ is called \textit{$a$-nice}   if $  d_{X}(x)\ge a(x) $ holds for every $ x\in X $.	 \end{itemize}

The subsets that are $b$-degenerate or $b$-nice are defines analogously.  Clearly, if $ X_1 $ is $ 1 $-good, then  $ X_1 $ is $a$-nice. Similarly, if $ X_2 $ is $ 2 $-good, then  $ X_2 $ is $b$-nice.

For two vertices $ u,v\in V(G) $, let $ e(u,v)$ denote the number of edges incident with each. Clearly, $ e(u,v)\in\{0,1\} $. For $X\subseteq V(G)$, $ e(X) $ denotes the number of edges with each end in $ X $. 

For a partition $ (X_1,X_2) $ of $ V(G) $, a  \textit{weight} $\omega(X_1,X_2)$ is defined as
\begin{align}
	\omega(X_1,X_2)=e(X_1)+e(X_2)+\sum_{u\in X_1}b(u)+\sum_{v\in X_2}a(v).
\end{align}
For  $ u\in B_1(X_1) $,  $ v\in B_2(X_2) $, a straightforward algebraic calculation shows that
\begin{align}\label{23a}
	\omega(X_1\setminus\{u\},X_2\cup\{u\})-\omega(X_1,X_2)&=d_{X_2}(u)-d_{X_1}(u)+a(u)-b(u)\nonumber\\&\ge (b(u)+h(u))-a(u)+a(u)-b(u)\\&=h(u),\nonumber
\end{align}
and
\begin{align}\label{24a}
	\omega(X_1\cup\{v\},X_2\setminus\{v\})-\omega(X_1,X_2)&=d_{X_1}(v)-d_{X_2}(v)+b(v)-a(v)\nonumber\\& \ge(a(v)+1)-(b(v)+h(v)-1)+b(v)-a(v)\\&=2-h(v)\nonumber.
\end{align}
The exchange of  $ u $  and  $ v $  yields a new partition:  $ X_{1}'=X_{1}\cup\{v\}\setminus\{u\} $,  $ X_{2}'=X_{2}\cup\{u\}\setminus\{v\} $.  It follows from \eqref{23a} and \eqref{24a} that
\begin{align}\label{25a}\nonumber
	w (X_{1}',X_{2}')-\omega(X_{1},X_{2}) &=\left(d_{X_{2}}(u)-d_{X_{1}}(u)+a(u)-b(u)\right)+\left(d_{X_{1}}(v)-d_{X_{2}}(v)-a(v)+b(v)\right)-2e(u,v)\\&\ge\,2-h(v)+h(u)-2e(u,v).
\end{align}

Note that in the previous computation, the value of $h(u)$ is predetermined and depends solely on whether $u$ belongs to $T_1$; that is, it is independent of whether $u$ lies in $X_1$ or $X_2$. In the next section, we proceed within the same framework assuming a partition $(T_0, T_1)$, which uniquely determines the value of $h(u)$ for every~$u$. The graph \( H \) that determines the preceding partition is either left unspecified or fixed to one of the graphs described in Theorem~\ref{13t}. In the former case, the partition \( (T_0, T_1) \) is not uniquely determined but is merely guaranteed to exist.

	 \section{Proof}\label{sec:proof}
	 
	In light of Theorems \ref{thm:B} and \ref{thm:C},  we may suppose that $T_0\ne \emptyset$ and $T_1\ne \emptyset$. 
	
	In what follows, we prove a sequence of claims, each under the assumptions introduced after Lemma~\ref{21l}. Taken together, these establish the proof of Theorem~\ref{13t}.

The first three claims examine the vertices of $G$.

	\begin{claim}\label{21c}
		For every vertex $ u\in V(G) $, $ d_G(u)=a(u)+b(u)+h(u) $.
	\end{claim}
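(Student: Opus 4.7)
The plan is a standard extremal argument exploiting the maximal choice of $(a,b)$. Since the hypothesis of Theorem~\ref{13t} already gives $d_G(u)\ge a(u)+b(u)+h(u)$ at every vertex, only the reverse inequality needs to be established, and it suffices to rule out strict excess at any single vertex.

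Concretely, I would suppose for contradiction that some $u_0\in V(G)$ satisfies $d_G(u_0)\ge a(u_0)+b(u_0)+h(u_0)+1$ and define $a'\colon V(G)\to\mathbb{Z}_{\ge 0}$ by $a'(u_0)=a(u_0)+1$ and $a'(v)=a(v)$ for $v\ne u_0$. Then I would check that $(a',b)$ still satisfies the full hypothesis of Theorem~\ref{13t}: the degree condition at $u_0$ survives thanks to the assumed strict excess, at every other vertex it is unchanged, and the auxiliary conditions~(i) or~(ii) on $\min\{a,b\}$ can only be strengthened since $a'\ge a$; moreover, $h$ depends only on $G$ and the prescribed subgraph $H$, not on $(a,b)$, so the partition $(T_0,T_1)$ is the same. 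Next, if $(A,B)$ were any $(a',b)$-feasible pair, then $d_A(x)\ge a'(x)\ge a(x)$ for every $x\in A$ and $d_B(y)\ge b(y)$ for every $y\in B$, so $(A,B)$ would already be $(a,b)$-feasible, contradicting our standing assumption. Hence no $(a',b)$-feasible pair exists, and $(a',b)$ is an admissible candidate whose sum strictly exceeds $\sum_{u\in V(G)}(a(u)+b(u))$, contradicting the maximality of $(a,b)$.

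The obstacle, if any, is minor and essentially bookkeeping: one must verify that the single-vertex perturbation $a\mapsto a'$ stays within the class of functions over which the extremum was taken (non-negativity, the degree condition, the auxiliary conditions~(i) or~(ii), and absence of a feasible pair). Once this is done, the contradiction with maximality is immediate, and no structural or subgraph-specific information about $G$ is needed, which is consistent with the preparatory nature of the claim.
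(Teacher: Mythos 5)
Your proposal is correct and matches the paper's proof essentially verbatim: the same single-vertex perturbation $a\mapsto a'$ with $a'(u_0)=a(u_0)+1$, contradicting the maximality of $\sum_{u\in V(G)}(a(u)+b(u))$. The only difference is that you explicitly verify that $(a',b)$ still admits no feasible pair and still satisfies the theorem's side conditions, details the paper leaves implicit.
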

\begin{proof}
	Suppose that $ d_G(u)\neq a(u)+b(u)+h(u) $ for some $u\in V(G)$. Then, $ d_G(u)\ge a(u)+b(u)+h(u)+1 $. Let $ a'\colon V(G)\longrightarrow\mathbb{Z}_{\ge 0}$ be a function satisfying \begin{align*}
		a'(v)=\begin{cases}
			a(v)+1, &v=u\\a(v), &v\neq u.
		\end{cases}
	\end{align*}
From $ d_G(u)\ge a(u)+b(u)+h(u)+1 =a'(u)+b(u)+h(u)$, we deduce that $ a' $ and $ b $  satisfy   $$\sum_{v\in V(G)}(a'(v)+b(v))>\sum_{v\in V(G)}(a(v)+b(v)).$$ However, this contradicts  the maximality of $ \sum_{v\in V(G)}(a(v)+b(v)) $.
\end{proof}

\begin{claim}\label{222c}
	For every vertex $ u\in V(G) $, $\min\{a(u),b(u)\}\ge1$.
\end{claim}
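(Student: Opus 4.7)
The plan is to argue by contradiction and exhibit the partition $(\{u\},V(G)\setminus\{u\})$ as an $(a,b)$-feasible one, directly contradicting our standing assumption. Suppose some vertex $u$ has $\min\{a(u),b(u)\}=0$; by the symmetry between $a$ and $b$ one may assume $a(u)=0$. Since $h(u)\in\{0,1\}$, the hypothesis in case (ii) of Theorem~\ref{13t} gives $\min\{a(u),b(u)\}\ge 2-h(u)\ge 1$, so the claim holds trivially in that case and it remains to treat case~(i). In case~(i) the hypothesis $\min\{a(v),b(v)\}\ge 1-h(v)$ forces $a(u)\ge 1$ whenever $u\in T_0$; consequently $u\in T_1$, so $h(u)=1$, and Claim~\ref{21c} then sharpens the degree bound to $d_G(u)=b(u)+1$.

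Next I would set $A=\{u\}$ and $B=V(G)\setminus\{u\}$; both parts are non-empty because $|V(G)|\ge 5$. The set $A$ is trivially $a$-feasible since $d_A(u)=0=a(u)$. To verify that $B$ is $b$-feasible, I would use Claim~\ref{21c} at each $v\in B$ to write
\[
d_B(v)=d_G(v)-e(u,v)=a(v)+b(v)+h(v)-e(u,v),
\]
so the required inequality $d_B(v)\ge b(v)$ reduces to $a(v)+h(v)\ge e(u,v)$. The only nontrivial situation is $uv\in E(G)$: if $v\in T_1$ then $h(v)=1$ already suffices, while if $v\in T_0$ the case~(i) hypothesis provides $a(v)\ge 1$. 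Hence $(A,B)$ is an $(a,b)$-feasible partition, the desired contradiction.

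I do not expect any serious obstacle here. The argument is a short degree count combined with the equality from Claim~\ref{21c} and the minimum-value condition of case~(i); no structural information about the prescribed subgraph $H$ is needed. The only point that requires a little care is the reduction to $u\in T_1$, since outside this subcase the hypothesis itself already excludes $\min\{a(u),b(u)\}=0$, after which the one-vertex partition is checked directly.
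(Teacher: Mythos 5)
Your proposal is correct and follows essentially the same route as the paper: reduce to case~(i) with $u\in T_1$, $a(u)=0$, and then verify that $(\{u\},V(G)\setminus\{u\})$ is an $(a,b)$-feasible partition using $a(v)\ge 1-h(v)$ at the neighbours of $u$. The only cosmetic difference is that you invoke the equality of Claim~\ref{21c} where the paper just uses the degree hypothesis $d_G(v)\ge a(v)+b(v)+h(v)$, which changes nothing.
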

\begin{proof}
		 If  $\min\{a(u),b(u)\}\ge2-h(u)$, then this claim holds. Otherwise,  $\min\{a(u),b(u)\}\ge1-h(u)$.  By symmetry, suppose that $ a(u)\le0 $. Hence, $ 1-h(u)\le a(u)\le0 $, and so $ h(u)\ge1 $. This, together with  definition of $ h(u) $, forces $ h(u)=1 $ and $ a(u)=0 $.

 Let $ X_1=\{u\} $ and $ X_2=V(G)\setminus\{u\} $.  Then $$ d_{X_2}(v)\ge a(v)+b(v)+h(v)-e(u,v)\ge 1-h(v)+b(v)+h(v)-1=b(v)$$ holds for every vertex $ v\in X_2 $. Therefore, $ (X_1,X_2) $ is  an $ (a,b) $-feasible partition of $ G $,  which contradicts the non-existence of such a partition (assumed in the previous section).
\end{proof}
\begin{claim}\label{22c}
		For every vertex $ u\in V(G) $, $  V(G)\setminus \left\lbrace u \right\rbrace  $ is  $ 2 $-good.
		\end{claim}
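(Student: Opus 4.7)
The plan is to deduce the claim directly from the two preceding claims, with essentially no additional work. Fix an arbitrary $u \in V(G)$ and set $X = V(G)\setminus\{u\}$. The goal is to verify that $d_X(v) \ge b(v) + h(v)$ for every $v \in X$, which is precisely the statement that $B_2(X) = \emptyset$, i.e.\ that $X$ is $2$-good.

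The first step is to compute $d_X(v)$ using the exact-degree identity from Claim~\ref{21c}: since $d_G(v) = a(v) + b(v) + h(v)$, removing $u$ gives $d_X(v) = a(v) + b(v) + h(v) - e(u,v)$, where $e(u,v) \in \{0,1\}$. The second step is a one-line case split on whether $uv$ is an edge of $G$. If $uv \notin E(G)$, then $e(u,v) = 0$ and $d_X(v) = a(v) + b(v) + h(v) \ge b(v) + h(v)$ follows at once from $a(v) \ge 0$. If $uv \in E(G)$, then $d_X(v) = a(v) + b(v) + h(v) - 1$, and the required inequality $d_X(v) \ge b(v) + h(v)$ reduces exactly to $a(v) \ge 1$, which is guaranteed by Claim~\ref{222c}. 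Either way, $v \notin B_2(X)$, and $u$ being arbitrary completes the argument.

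There is no genuine obstacle here; the only way the step could fail is if some neighbor of $u$ carried $a$-value zero, and that possibility was already eliminated by Claim~\ref{222c}. The substantive content---the vertex-exchange argument and the appeal to the non-existence of an $(a,b)$-feasible pair---has been absorbed into the previous two claims, so Claim~\ref{22c} is essentially an immediate corollary rather than an independent result.
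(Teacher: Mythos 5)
Your proof is correct and matches the paper's argument: both reduce the claim to the bound $d_{V(G)\setminus\{u\}}(v) \ge a(v)+b(v)+h(v)-e(u,v) \ge b(v)+h(v)$, using $a(v)\ge 1$ from Claim~\ref{222c} (your case split on $uv\in E(G)$ is just an unpacking of the single inequality chain in the paper). No gap.
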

	\begin{proof}	For every vertex $ v\in V(G)\setminus\left\lbrace u \right\rbrace  $,   Claim \ref{222c} ensures     \begin{align*}
			d_{V(G)\setminus \left\lbrace u \right\rbrace }(v)\ge (a(v)+b(v)+h(v))-e(v,u)
			\ge b(v)+1+h(v)-1=b(v)+h(v).
		\end{align*}
			Since $ v $ is arbitrary, $ V(G)\setminus \left\lbrace u \right\rbrace $ is $ 2 $-good, as desired.	\end{proof}

We now show the existence of a degenerate partition.

\begin{claim}\label{23c}
		There exists a degenerate partition of $ V(G) $.
	\end{claim}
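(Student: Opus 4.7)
The plan is to build a degenerate partition by taking $X_{1}$ to be a maximal $1$-meager subset of $V(G)$ under inclusion, setting $X_{2}=V(G)\setminus X_{1}$, and verifying that both parts are non-empty with $X_{2}$ actually $2$-meager. Such a maximal $X_{1}$ exists because $V(G)$ is finite and singletons are $1$-meager.

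I would first check the boundary conditions. Non-emptiness of $X_{1}$ is immediate since every singleton $\{v\}$ is $1$-meager: $d_{\{v\}}(v)=0\le a(v)$ places $v$ in $B_{1}(\{v\})$. To see $X_{1}\neq V(G)$, I would argue by contradiction using Claims~\ref{21c} and~\ref{222c}. If $V(G)$ were $1$-meager, applying the definition to the subset $V(G)$ itself would produce a vertex $u$ with $d_{G}(u)\le a(u)$; yet Claim~\ref{21c} gives $d_{G}(u)=a(u)+b(u)+h(u)$ and Claim~\ref{222c} gives $b(u)\ge 1$, so $d_{G}(u)\ge a(u)+1$, a contradiction. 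Hence $X_{2}\neq\emptyset$.

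The heart of the argument is showing $X_{2}$ is $2$-meager. Assume otherwise: some non-empty $X'\subseteq X_{2}$ is $2$-good, so $d_{X'}(v)\ge b(v)+h(v)$ for each $v\in X'$. Pick any $v\in X'$. By Claim~\ref{21c},
\[
d_{X_{1}}(v)=d_{G}(v)-d_{X_{2}}(v)\le a(v)+b(v)+h(v)-d_{X'}(v)\le a(v).
\]
I would then argue that $X_{1}\cup\{v\}$ is still $1$-meager. For any non-empty $Y\subseteq X_{1}\cup\{v\}$, either $v\in Y$, in which case $d_{Y}(v)\le d_{X_{1}}(v)\le a(v)$ puts $v$ into $B_{1}(Y)$, or $v\notin Y$, in which case $Y\subseteq X_{1}$ and the $1$-meagerness of $X_{1}$ gives $B_{1}(Y)\neq\emptyset$. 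This contradicts the maximality of $X_{1}$, so $X_{2}$ must be $2$-meager and $(X_{1},X_{2})$ is the desired degenerate partition.

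The step I expect to need the most care is the extension argument showing that $X_{1}\cup\{v\}$ remains $1$-meager; however, it reduces to a case split on whether $v$ lies in the chosen subset, and becomes routine once the degree bound $d_{X_{1}}(v)\le a(v)$ is established. No deeper obstacle is anticipated: the argument uses only Claims~\ref{21c} and~\ref{222c} and the defining properties of $1$-meager and $2$-good sets, so it goes through uniformly in both cases (i) and (ii) of Theorem~\ref{13t} and does not depend on the particular subgraph $H$ used to define $T_{0}$ and $T_{1}$.
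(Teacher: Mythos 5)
Your proof is correct, and it takes a genuinely different route from the paper. The paper starts from the other side: using its Claim~\ref{22c} it picks a $2$-good set $X_2$ of minimum cardinality, shows the complement is $1$-meager by invoking the non-existence of an $(a,b)$-feasible pair, and then—since a minimum $2$-good set is itself not $2$-meager—performs an extra exchange step, moving a vertex $u\in X_2$ with $d_{X_2}(u)=b(u)+h(u)$ into $X_1$ and checking that both parts of the new partition are meager (using $d_{X_1}(u)=a(u)$ from Claim~\ref{21c}). You instead take a maximal $1$-meager set $X_1$ and show its complement is automatically $2$-meager: any $2$-good subset $X'$ of the complement would give a vertex $v$ with $d_{X_1}(v)=d_G(v)-d_{X_2}(v)\le a(v)+b(v)+h(v)-d_{X'}(v)\le a(v)$, so $X_1\cup\{v\}$ would still be $1$-meager, contradicting maximality. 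Your argument is shorter, needs only Claims~\ref{21c} and~\ref{222c} (not Claim~\ref{22c}, nor a direct appeal to the non-existence of a feasible pair inside this proof), and avoids the exchange step; the paper's construction has the minor side benefit of exhibiting a vertex with tight degree $d_{X_2}(u)=b(u)+h(u)$, but that information is not needed here and is recovered later anyway through the family $\mathcal{P}$ and Claim~\ref{29c}. All boundary checks in your write-up (non-emptiness of both parts via $a(v)\ge 0$, Claim~\ref{21c} and $b(u)\ge 1$) are sound.
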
 \begin{proof}
	By Claim \ref{22c}, there exists a proper subset of $ V(G) $ that is $ 2 $-good. Let $ X_2 $ be a 2-good subset of $V(G)$ with the minimum cardinality. By the minimality of $ X_2 $, every proper non-empty subset of $ X_2 $ is 2-meager. Let $ X_1=V(G)\setminus X_2 $. Since $ X_2 $ is a proper subset of $ V(G) $, $ X_1\neq\emptyset $. If there exists a $ 1 $-good subset $ X_1' $ of $ X_1 $, then $ (X_1',X_2) $ is an $ (a,b) $-feasible pair of $ G $, which is impossible. Hence, $ X_1 $ is 1-meager.
	
	The minimality of $ X_2 $ yields the existence of a vertex $ u\in X_2 $ with $ d_{X_2}(u)=b(u)+h(u) $. Indeed, if $ d_{X_2}(w)\ge b(w)+h(w)+1 $ holds for every vertex $ w\in X_2 $, then   $ X_2\setminus\{w\} $ would also be a  $ 2 $-good subset of $ V(G) $. 
	
	We set $ X_1''=X_1\cup\left\lbrace u \right\rbrace$ and $ X_2''=X_2\setminus \left\lbrace u \right\rbrace  $. Obviously, $(X_1'', X_2'')$ is a partition of $V(G)$, and to conclude the proof it is sufficient to show that $X_i''$ is $i$-meager for $i\in\{1,2\}$. Since $$ \left| X_2\right|\ge d_{X_2}(u)+1= b(u)+h(u)+1\ge 1-h(u)+h(u)+1=2,$$ $ X_2'' $ is non-empty. Hence, $ X_2'' $ is $ 2 $-meager. On the other hand, $d_{X_1''}(u)=d_{X_1}(u)=a(u)$ by Claim \ref{21c}. Since $ X_1 $ is $ 1 $-meager,  $ X_1'' $ is also $ 1 $-meager, and we are done.
\end{proof}

Let  $ \mathcal{P} $ denote the family of degenerate partitions $ (X_1,X_2) $ of $ V(G) $ satisfying the following two conditions:
\begin{itemize}
	\item $ \omega(X_1,X_2) $ is maximum;
	\item  $ \left| X_1\right|  $ is  minimum,  subject to the previous item.
\end{itemize}
By Claim \ref{23c}, $ \mathcal{P}\neq\emptyset $. In addition,  $ X_1 $  is  $ (a-1) $-degenerate  or  $ X_2 $  is  $ (b-1) $-degenerate, as  $ G $  does not admit a feasible partition.

 \begin{claim}\label{28c}
If $ (X_1,X_2)\in\mathcal{P} $, then 	$\min\{ |X_1|,\left|X_2 \right|\} \ge2 $.
\end{claim}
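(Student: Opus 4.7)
My plan is to argue by contradiction, assuming $(X_1, X_2) \in \mathcal{P}$ satisfies $\min\{|X_1|, |X_2|\} \le 1$. The case $|X_1| = 1$ is immediate: writing $X_1 = \{u\}$, Claim~\ref{22c} gives that $X_2 = V(G) \setminus \{u\}$ is $2$-good, directly contradicting the $2$-meager condition that a degenerate partition imposes on $X_2$.

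For the remaining case $|X_2| = 1$, write $X_2 = \{v\}$ and $X_1 = V(G) \setminus \{v\}$. Since $X_1$ is $1$-meager I pick $u \in B_1(X_1)$. Combining Claim~\ref{21c}, $d_{X_1}(u) \le a(u)$, and Claim~\ref{222c} forces $e(u, v) = d_{X_2}(u) = b(u) + h(u) = 1$ together with $d_{X_1}(u) = a(u)$, and then $b(u) = 1$, $h(u) = 0$, so $u \in T_0$. For part~(ii) this already concludes the argument, because the hypothesis $\min\{a(u), b(u)\} \ge 2 - h(u) = 2$ contradicts $b(u) = 1$.

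For part~(i) I consider the partition $(X_1', X_2') = (V(G) \setminus \{u, v\}, \{u, v\})$. A direct application of~(\ref{23a}) gives $\omega(X_1', X_2') = \omega(X_1, X_2)$, and $|X_1'| = |X_1| - 1$; since $X_1' \subseteq X_1$ inherits $1$-meagerness, a $2$-meager $\{u, v\}$ would contradict the minimality of $|X_1|$ in $\mathcal{P}$. With $e(u, v) = 1$ and $b(u) + h(u) = 1$, the only way $\{u, v\}$ fails to be $2$-meager is $b(v) + h(v) = 1$, forcing $b(v) = 1$, $h(v) = 0$. In that subcase $\{u, v\}$ is instead $2$-good, hence $b$-feasible; the non-existence of an $(a, b)$-feasible pair then forbids any $a$-feasible subset of $V(G) \setminus \{u, v\}$, so some $w$ with $d_{V(G) \setminus \{u, v\}}(w) < a(w)$ exists, and the degree identity pins down $w$ as adjacent to both $u$ and $v$ with $b(w) = 1$, $h(w) = 0$. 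Hence $\{u, v, w\}$ is a triangle in $T_0$.

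I then iterate. Since $\{u, v, w\}$ is $b$-feasible, the same mechanism produces $w' \in V(G) \setminus \{u, v, w\}$ adjacent to at least two of $\{u, v, w\}$, and a short case check shows $\{u, v, w, w'\}$ always contains a copy of $K_4 - e$ whose vertex set includes $v$. One further iteration, using that $\{u, v, w, w'\}$ is $b$-feasible and $|V(G)| \ge 5$, supplies $x \in V(G) \setminus \{u, v, w, w'\}$ adjacent to at least two of $\{u, v, w, w'\}$; picking any two such adjacencies gives a subgraph of $G$ on $\{u, v, w, w', x\}$ isomorphic to $H$ and containing $v$, contradicting $v \in T_0$. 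I expect the main obstacle to be exactly this iterative extraction: at each step I must verify $b$-feasibility of the accumulated set so as to reapply the no-feasible-pair hypothesis, and the adjacency patterns of $w'$ (two or three neighbors in $\{u, v, w\}$) and of $x$ (two, three, or four neighbors in $\{u, v, w, w'\}$) must be handled so that an $H$-copy through $v$ is always exhibited.
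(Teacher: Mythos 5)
Your proof is correct, but it follows a genuinely different route from the paper's for the substantive case $|X_2|=1$. (Your treatment of $|X_1|\ge 2$ via Claim~\ref{22c} is a harmless variant of the paper's degree count on a vertex of $B_2(X_2)$, and your quick dismissal of case (ii) matches the paper's in spirit.) For $|X_2|=\{v\}$ under hypothesis (i), the paper stays inside the exchange machinery: it shows every vertex of $B_1(X_1)$ is adjacent to $v$ with $b=1$, $h=0$, then analyses $|N_{X_1}(x)\cap B_1(X_1)|\in\{1,2\}$ and repeatedly moves small vertex sets across the partition to manufacture $(a,b)$-feasible pairs, which takes roughly a page of case analysis. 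You instead use the non-existence of an $(a,b)$-feasible pair as an engine to grow a $b$-feasible core: Claim~\ref{21c} plus $d_{X_1}(u)\le a(u)$ pins down $b(u)=1$, $h(u)=0$ and $uv\in E(G)$; the weight computation \eqref{23a} together with the minimality of $|X_1|$ forces $\{u,v\}$ to be $2$-good with $v\in T_0$; and then each of the sets $\{u,v\}$, $\{u,v,w\}$, $\{u,v,w,w'\}$, being $b$-feasible, yields (again via Claim~\ref{21c} and Claim~\ref{222c}) a further vertex with at least two neighbours in it, producing successively a triangle, a $K_4-e$ through $v$, and finally a copy of $H$ through $v$, contradicting $v\in T_0$. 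This is shorter and avoids Claim~\ref{26c} entirely; what it buys is a direct structural contradiction (a forbidden subgraph through a $T_0$ vertex) instead of the paper's constructed feasible pairs, and it uses $|V(G)|\ge 5$ only to guarantee the last two extraction steps have nonempty complements. One caveat: your final step treats any attachment of the fifth vertex to two vertices of the $K_4-e$ as a copy of $H$; this "any two vertices" reading of $H$ is the one the paper itself relies on (e.g.\ when it argues that certain adjacencies would place a $T_0$ vertex in $T_1$), so your argument is consistent with the paper's conventions, but it would need revisiting if $H$ were a single fixed attachment pattern.
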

\begin{proof} Let   $ u\in B_2(X_2) $. By Claim \ref{21c}, we have   $$ \left| X_1 \right|\ge e(u,X_1)\ge d_G(u)-(b(u)+h(u)-1)\ge a(u)+1\ge2.$$ 

To complete the argument it suffices to establish the same inequality for $|X_2|$.  Assume that $|X_2|=1$, i.e. $X_2=\{u\}$.  We begin by deriving a collection of structural constraints. Using these constraints, we then construct one or more $(a,b)$-feasible partitions which contradict the assumption, thereby excluding the case $|X_2|=1$.

If $\min \left\lbrace a(v),b(v)\right\rbrace\ge 2-h(v)$ holds for every $v\in X_1$, then  $$d_{X_1}(v)\ge a(v)+b(v)+h(v)-e(u,v)\ge a(v)+2-h(v)+h(v)-e(u,v)\ge a(v)+1,$$ for every vertex $ v\in X_1 $. However, this contradicts the assumption $(X_1,X_2)\in\mathcal{P}$. Therefore, the assumption (ii) of Theorem~\ref{13t} is not satisfied, meaning the defining graph of the set $T_1$, denoted by $H$, is as in (i): obtained   by connecting a single vertex to exactly two vertices of $ K_{4} -e $. 

   Since $ X_1 $ is $ 1 $-meager, there is a vertex $x\in B_1(X_1)$. Since  $ d_{X_2}(x)\ge b(x)+h(x)\ge 1 $, by Claim \ref{222c},  we have $ux\in E(G)$. If $ (X_1\setminus\{x\}, X_2\cup\{x\}) $ is a degenerate partition, then the inequality \eqref{23a} implies $$ \omega(X_1\setminus\{x\}, X_2\cup\{x\})-\omega(X_1,X_2)\ge h(x)\ge 0,$$ which contradicts  the minimality of $| X_1|$. Thus, the partition is not degenerate, which implies that $X_2\cup\{x\}$ is $2$-good, and so $1=d_{\{u,x\}}(u)\ge b(u)+h(u)$ and
$1=d_{\{u,x\}}(x)\ge b(x)+h(x)$. Hence, $b(u)=b(x)=1$ and $h(u)=h(x)=0$, by Claim \ref{222c}. This, in particular, means that $x,u\in T_0$. In addition, for every  $x\in B_1(X_1)$, from  $ux\in E(G)$  and $d_{\{u,x\}}(x)=b(x)=1$, by employing  Claim \ref{22c} we obtain $d_{X_1}(x)=a(x)$.

From $x\in T_0$ and (again) $ux\in E(G)$, we have $$\left| N_{X_1}(x)\cap B_1(X_1)\right|\le 2,$$
as  $ \left| N_{X_1}(x)\cap B_1(X_1)\right|\ge 3 $ would imply $ x\in T_1(G) $.

 If $ \left| N_{X_1}(x_0)\cap B_1(X_1)\right|=0$ for some $x_0\in B_1(X_1)$, then $(X_1\setminus \{x_0\}, X_2\cup \{x_0\})$ is an $(a,b)$-feasible partition, which is impossible. We select $y_0\in N_{X_1}(x_0)\cap B_1(X_1)$.  By the symmetry of $x_0$ and $y_0$, we have $1\le  \left| N_{X_1}(y_0)\cap B_1(X_1)\right|\le 2$.

We claim that  $|N_{X_1}(y_0)\cap B_1(X_1)|=|N_{X_1}(x_0)\cap B_1(X_1)|=1$. The other possibility for these cardinalities is $2$. Without loss of generality, suppose that   $ N_{X_1}(y_0)\cap B_1(X_1)=\{x_0,w_0\} $.
Since $|V(G)|\ge 5$, we have  $X_1\ne \{w_0,y_0,x_0\}$.
Let $x\in X_1\setminus \{w_0,y_0,x_0\}$. If $x\in B_1(X_1)$, then $N_G(x)\cap \{w_0,y_0,x_0\}=\emptyset$ holds by the structure of $H$, as $x_0\in T_0$. If   $x\not\in B_1(X_1)$, then $|N_G(x)\cap \{w_0,y_0,x_0\}|\le 1$, by the same argument  and $d_{X_1}(x)\ge a(x)+1$. Let $X''_1=X_1\setminus \{w_0,y_0,v_0\}$ and $X''_2=X_2\cup \{w_0,y_0,x_0\}$. Irrespective of whether $x\in B_1(X_1)$, we have $d_{X''_1}(x)\ge a(x)$ and $d_{X''_2}(y)\ge 2>b(y)$ for every $y\in X''_2$. Thus, $(X''_1, X''_2)$ is an $(a, b) $-feasible partition, which is impossible. This confirms our claim.

  On the basis of $ |V(G)|\ge 5 $, let $x\in X_1\setminus\{x_0,y_0\}\neq\emptyset $.
  If $ \left| N(x)\cap \{x_0,y_0\}\right|\le 1$ holds  for every $x\in X_1\setminus\{x_0,y_0\}$, then     $(X_1\setminus\{x_0,y_0\}, X_2\cup\{x_0,y_0\})$ is an $ (a,b) $-feasible partition. Thus, there exists   $x\in X_1\setminus B_1(X_1)$ such that  $ \left| N(x)\cap \{x_0,y_0\}\right|=2$.   Since $u\in T_0$,  $x$ features as the unique vertex of  $ X_1\setminus B_1(X_1) $ with  $ \left| N(x)\cap \{x_0,y_0\}\right|=2$.

  Finally, if $ d_{X_1}(x)\ge a(x)+2 $, then $ (X_1\setminus\{x_0,y_0\}, X_2\cup\{x_0,y_0\}) $ is an $ (a,b) $-feasible partition. Hence, $d_{X_1}(x)=a(x)+1 $. From $ X_1\setminus\{x,y_0,x_0\}\neq\emptyset $, we have $ \left|N(v)\cap\{x,y_0,x_0\} \right|\le 1 $ for every $ v\in X_1\setminus\{x,y_0,x_0\}$, as $u\in T_0$. Hence, $ X_1\setminus\{x,y_0,x_0\} $ is $ a $-nice. It  is easy check  that $(X_1\setminus\{x,y_0,x_0\}, X_2\cup \{x,y_0,x_0\})$ features as an $(a,b)$-feasible partition, which is the final contradiction. \end{proof}

We proceed by proving the existence of a particular good subset.

	\begin{claim}\label{26c}
		Let $ (X_1,X_2)\in\mathcal{P} $ and $ i\in\{1,2\} $. For $ u_i\in B_i(X_i) $, there exists a $ (3-i) $-good subset of $ X_{3-i}\cup\{u_i\} $ containing $ u_i $.
	\end{claim}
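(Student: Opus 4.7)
The plan is to exploit the extremal properties of $(X_1,X_2)\in\mathcal{P}$ by performing the single-vertex exchange that moves $u_i$ to the opposite side of the partition. The weight inequalities \eqref{23a} and \eqref{24a}, available precisely because $u_i\in B_i(X_i)$, will guarantee that the new partition has weight at least $\omega(X_1,X_2)$; the maximality of weight in $\mathcal{P}$ (and, when relevant, the secondary minimality of $|X_1|$) will then rule out degeneracy of the resulting partition, which is exactly an existence statement for a good subset. The remaining task is to verify that the good subset so produced actually contains $u_i$, and this follows from the observation that meagerness is inherited by subsets.

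I would treat $i=1$ first. Set $X_1'=X_1\setminus\{u_1\}$ and $X_2'=X_2\cup\{u_1\}$; Claim~\ref{28c} gives $|X_1|\ge 2$, so both parts are non-empty. Inequality \eqref{23a} yields $\omega(X_1',X_2')\ge\omega(X_1,X_2)+h(u_1)\ge\omega(X_1,X_2)$. If $(X_1',X_2')$ were degenerate, it would contradict the extremality defining $\mathcal{P}$: either it strictly increases the weight (when $h(u_1)=1$), or it ties the weight while having a strictly smaller first part (when $h(u_1)=0$). Hence $(X_1',X_2')$ is not degenerate. Next, $X_1'$ remains $1$-meager, because any non-empty $X'\subseteq X_1'$ is also a non-empty subset of $X_1$, whose $1$-meagerness forces $B_1(X')\ne\emptyset$. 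Therefore $X_2'$ is not $2$-meager, so it admits a non-empty $2$-good subset $Y$. If $u_1\notin Y$, then $Y\subseteq X_2$ would contradict the $2$-meagerness of $X_2$; hence $u_1\in Y$, producing the desired $2$-good subset of $X_2\cup\{u_1\}$ that contains $u_1$.

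For $i=2$ the argument is dual. Set $X_1'=X_1\cup\{u_2\}$ and $X_2'=X_2\setminus\{u_2\}$, both non-empty by Claim~\ref{28c}, and use \eqref{24a} to obtain $\omega(X_1',X_2')\ge\omega(X_1,X_2)+2-h(u_2)>\omega(X_1,X_2)$; the maximality of weight alone then excludes degeneracy, with no tiebreak on $|X_1|$ needed. The role of meagerness is then swapped: $X_2'$ remains $2$-meager by subset inheritance, so non-degeneracy supplies a $1$-good subset $Y\subseteq X_1'$, and the $1$-meagerness of $X_1$ forces $u_2\in Y$. The main obstacle of the whole argument is precisely this last step, namely guaranteeing $u_i\in Y$ rather than merely producing a good subset somewhere inside $X_{3-i}\cup\{u_i\}$; it is handled by applying the subset-inheritance of meagerness to the side that has not been altered, and without this observation the conclusion would be too weak for the rest of the proof of Theorem~\ref{13t}.
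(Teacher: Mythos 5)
Your proposal is correct and follows essentially the same route as the paper's proof: move $u_i$ across the partition, use \eqref{23a}/\eqref{24a} with the maximality of $\omega$ (and the minimality of $|X_1|$ in the tied case $h(u_1)=0$) to rule out degeneracy, note that meagerness of the untouched side is inherited by subsets, and conclude that the resulting good subset must contain $u_i$ since $X_{3-i}$ itself is meager. The only difference is presentational: you spell out the tie-breaking and the $u_i\in Y$ step in slightly more detail than the paper does.
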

\begin{proof}
     It follows from \eqref{23a} and \eqref{24a} that
     \begin{align*}
     	\omega(X_1\setminus\{u_1\},X_2\cup\{u_1\})-\omega(X_1,X_2)\ge h(u) \ge0,
     \end{align*}
 and
 \begin{align*}
 	\omega(X_1\cup\{u_2\},X_2\setminus\{u_2\})-\omega(X_1,X_2)\ge2-h(v)\ge1.
 \end{align*}
This, together with the maximality of $ \omega(X_1,X_2) $ and the minimality of $\left|X_1 \right|$, implies that the partitions $ (X_1\setminus\{u_1\},X_2\cup\{u_1\}) $ and $ (X_1\cup\{u_2\},X_2\setminus\{u_2\}) $ are not degenerate.

 Since $ \left| X_1\right|\ge2 $, we have  $ X_1\setminus\{u_1\}\neq\emptyset $. Observing that  $ X_1 $ is 1-meager, we find that the same holds for $ X_1\setminus\{u_1\} $, and so  $ X_2\cup\{u_1\} $ is not 2-meager. Hence, there exists a  $ 2 $-good subset $ X_2' $ of $ X_2\cup\{u_1\} $. Since $ X_2 $ is 2-meager, we have $ u_1\in X_2' $, and so the desired result holds for $i=1$. The case $ i=2 $ is considered analogously.
\end{proof}

Within the same degenerate partition, we now establish a structural characterisation of the vertices of $B(X_1)$, formulated in relation to the vertices of $B_1(X_2)$.

	 \begin{claim}\label{24c}
	 	For  $ (X_1,X_2)\in\mathcal{P} $, every vertex in $ B_1(X_1) $ is adjacent to all vertices of $ B_2(X_2) $.
	 \end{claim}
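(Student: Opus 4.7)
The plan is a proof by contradiction: I would suppose that some $u\in B_1(X_1)$ and some $v\in B_2(X_2)$ satisfy $uv\notin E(G)$, and from this derive either an $(a,b)$-feasible pair (forbidden by the standing hypothesis) or a direct violation of the defining degree inequalities of $B_1(X_1)$ and $B_2(X_2)$.

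The main instrument is Claim~\ref{26c}. Applied with $i=1$ to $u$, it produces a $2$-good subset $Z_2\subseteq X_2\cup\{u\}$ containing $u$; applied with $i=2$ to $v$, it produces a $1$-good subset $Z_1\subseteq X_1\cup\{v\}$ containing $v$. Since $u\in X_1$, $v\in X_2$ and $X_1\cap X_2=\emptyset$, a short set-theoretic computation gives
\[
Z_1\cap Z_2\;\subseteq\;(X_1\cup\{v\})\cap(X_2\cup\{u\})\;\subseteq\;\{u,v\},
\]
so the two good subsets can overlap only at $u$ or at $v$.

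From here I would split into two cases. If $u\notin Z_1$ and $v\notin Z_2$, then $Z_1\cap Z_2=\emptyset$; as $Z_1$ is $1$-good and $Z_2$ is $2$-good (hence $a$-feasible and $b$-feasible respectively), the pair $(Z_1,Z_2)$ is $(a,b)$-feasible, so Lemma~\ref{21l} produces an $(a,b)$-feasible partition, contradicting the standing assumption. Otherwise $u\in Z_1$ or $v\in Z_2$, and this is precisely where the non-adjacency $uv\notin E(G)$ is used. If $u\in Z_1\subseteq X_1\cup\{v\}$, then
\[
d_{Z_1}(u)\;\le\;d_{X_1}(u)+e(u,v)\;=\;d_{X_1}(u)\;\le\;a(u),
\]
contradicting the bound $d_{Z_1}(u)\ge a(u)+1$ forced by the $1$-goodness of $Z_1$; the case $v\in Z_2$ is entirely symmetric, with $b+h$ playing the role of $a$ and the defining inequality of $B_2(X_2)$ replacing that of $B_1(X_1)$.

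The step I expect to carry the argument is the containment $Z_1\cap Z_2\subseteq\{u,v\}$: once this is in hand, the first branch is a one-line appeal to Lemma~\ref{21l} and the second branch is a one-line degree estimate. Notably, no further use of the weight function $\omega$ or the extremal choice of $(X_1,X_2)$ is required beyond what has already been packaged into Claim~\ref{26c}, which is the principal leverage of that earlier claim.
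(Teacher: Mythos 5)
Your proposal is correct and follows essentially the same route as the paper: both apply Claim~\ref{26c} to each of the two vertices, use the non-adjacency together with $d_{X_1}(u)\le a(u)$ and $d_{X_2}(v)\le b(v)+h(v)-1$ to exclude $u$ from the $1$-good set and $v$ from the $2$-good set, and then conclude that the two good sets are disjoint and form an $(a,b)$-feasible pair, contradicting the standing assumption. The only difference is presentational (your case split versus the paper's direct exclusion of the crossing memberships), so no further comment is needed.
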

	\begin{proof} By Claim \ref{28c}, we have    $\min\{|X_1|,   | X_2|\}\ge 2$. Suppose that there {exists}   $ u_1\in B_1(X_1) $ and $ u_2\in B_2(X_2)$ such that $u_1u_2\not\in E(G)$.  Let $ X_1'=X_1\setminus\{u_1\} $ and $ X_2'=X_2\cup\{u_1\} $. By Claim \ref{26c},  there exists a $ 2 $-good subset $ B $ of $ X_2\cup\{u_{1}\} $ with $ u_{1}\in B $.  Since $ u_1u_2\notin E(G) $ and $ d_{X_2}(u_2)\le b(u_2)+h(u_2)-1 $, we have  $ u_2\notin B $.
 
 Analogously, by considering  $ (X_1\cup\{u_2\},X_2\setminus\{u_2\}) $, we deduce the existence of a $ 1 $-good subset $ A $ of $ X_1\cup\{u_2\} $, with $ u_{2}\in A $ and $u_1\not\in A$.

Thus, $ (A,B) $ features as an $ (a,b) $-feasible pair of $ G $, and we are done.
	\end{proof}

We next examine a perturbation of the initial degenerate partition, achieved through the exchange of two designated vertices.

	\begin{claim}\label{25c}
		Let $ (X_1,X_2)\in\mathcal{P} $ and  $ u_i\in B_i(X_i) $. If either $ u_1,u_2\in T_0$ or $u_1\in T_1$, then $ ((X_1\setminus\left\lbrace u_1\right\rbrace )\cup\left\lbrace u_2\right\rbrace,(X_2\setminus\left\lbrace u_2\right\rbrace )\cup\left\lbrace u_1\right\rbrace )\in\mathcal{P} $. In addition, $u_1\in T_1$ implies  $u_2\in T_1$.
	\end{claim}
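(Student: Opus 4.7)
My plan is to apply inequality~\eqref{25a} combined with Claim~\ref{24c} (which yields $e(u_1, u_2) = 1$). Writing $X_1' = (X_1 \setminus \{u_1\}) \cup \{u_2\}$ and $X_2' = (X_2 \setminus \{u_2\}) \cup \{u_1\}$, this gives
\[
\omega(X_1', X_2') - \omega(X_1, X_2) \ge h(u_1) - h(u_2),
\]
while $|X_1'| = |X_1|$ trivially. The core of the proof is then to show that $(X_1', X_2')$ is degenerate, after which maximality of $\omega$ over degenerate partitions does the rest.

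To verify $X_1'$ is $1$-meager, I would proceed by contradiction. A supposed $1$-good $A \subseteq X_1'$ must contain $u_2$, else $A \subseteq X_1$ contradicts the $1$-meagerness of $X_1$. By Claim~\ref{26c}, there is a $2$-good $B_0 \subseteq X_2 \cup \{u_1\}$ with $u_1 \in B_0$, and a direct computation of the intersection gives $A \cap B_0 \subseteq \{u_2\}$. If $u_2 \notin B_0$, then $(A, B_0)$ is an $(a,b)$-feasible pair, which is impossible. If $u_2 \in B_0$, then the $2$-goodness of $B_0$ at $u_2$ combined with $u_1 u_2 \in E(G)$ saturates the bound $d_{X_2}(u_2) \le b(u_2) + h(u_2) - 1$ to equality; by Claim~\ref{21c} this yields $d_{X_1}(u_2) = a(u_2) + 1$, hence $d_{X_1'}(u_2) = d_{X_1}(u_2) - 1 = a(u_2)$, contradicting $d_A(u_2) \ge a(u_2) + 1$. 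A symmetric argument --- applied to a hypothetical $2$-good $B \subseteq X_2'$ (forced to contain $u_1$) and to a $1$-good $A_0 \subseteq X_1 \cup \{u_2\}$ provided by Claim~\ref{26c} --- establishes that $X_2'$ is $2$-meager.

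With $(X_1', X_2')$ degenerate, the maximality of $\omega$ in $\mathcal{P}$ yields $\omega(X_1', X_2') \le \omega(X_1, X_2)$, hence $h(u_1) \le h(u_2)$. In the sub-hypothesis $u_1, u_2 \in T_0$, both sides vanish; in the sub-hypothesis $u_1 \in T_1$, we get $h(u_2) \ge h(u_1) = 1$, so $u_2 \in T_1$, which establishes the second assertion. In both admitted cases $h(u_1) = h(u_2)$, so $\omega(X_1', X_2') = \omega(X_1, X_2)$, and together with $|X_1'| = |X_1|$ being minimal this places $(X_1', X_2')$ in $\mathcal{P}$.

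The hard part will be the overlap sub-cases $u_2 \in B_0$ and $u_1 \in A_0$ within the degeneracy step. The key idea is that in such an overlap the goodness condition on the intruding vertex --- adjacent to the exchanged vertex thanks to Claim~\ref{24c} --- forces tightness in the corresponding $B_i$-inequality, so that passing to the new part removes precisely the one edge that the exchanged vertex would have needed to retain its goodness, producing the contradiction.
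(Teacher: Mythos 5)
Your proposal is correct and follows essentially the same route as the paper's own proof: verify that the swapped partition is degenerate by pitting a hypothetical $1$-good (resp.\ $2$-good) subset of the new part against the good subset supplied by Claim~\ref{26c}, handle the overlap case by forcing $d_{X_2}(u_2)=b(u_2)+h(u_2)-1$ (resp.\ $d_{X_1}(u_1)=a(u_1)$) via Claim~\ref{21c} to contradict goodness, and then use \eqref{25a} with $e(u_1,u_2)=1$ together with the maximality of $\omega$ and $|X_1'|=|X_1|$ to place the new partition in $\mathcal{P}$ and deduce $u_2\in T_1$ when $u_1\in T_1$. Your explicit appeal to Claim~\ref{24c} for $u_1u_2\in E(G)$ is a slightly more careful rendering of a step the paper leaves implicit, but the argument is the same.
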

	 \begin{proof}
	 	 Let $ X_i'=(X_i\setminus\{u_i\})\cup\{u_{3-i}\} $.  We first prove that $ (X_1',X_2') $ is a degenerate partition. It suffices to show that $ X_i' $ is $i$-meager for each $ i\in\{1,2\} $.

We  show that $ X_2' $ is 2-meager. For the sake of contradiction, we assume the opposite.
 Since $ X_2 $ is 2-meager, there exists a  $ 2$-good subset $ B $ of $ X_2' $ with $ u_{1}\in B $.   It follows from Claim \ref{26c} that there {exists} a $ 1 $-good subset $ A $ of  $ X_1\cup\{u_2\} $ containing $ u_2 $.  If $ u_1\notin A $, then $ (A,B) $ is an $ (a,b) $-feasible pair of $ G $. Otherwise, $$ a(u_1)+1\le d_{X_1\cup\{u_2\}}(u_1)=d_{X_1}(u_1)+e(u_1,u_2)\le a(u_1)+1$$ forces  $ d_{X_1}(u_1)=a(u_1) $. Hence, $ d_{X_2}(u_1)=b(u_1)+h(u_1) $ and $ d_{X_2\setminus\{u_2\}}(u_1)=b(u_1)+h(u_1)-1 $, but this violates the assumption $ u_1\in B $.
	 	
 We now show that $ X_1' $ is 1-meager. Again we assume the opposite. Since $ X_1 $ is 1-meager, there exists a  $ 1$-good subset $ A $ of $ X_1' $ with $ u_{2}\in A $.   It follows from Claim \ref{26c} that there {exists} a $ 2 $-good subset $ B $ of  $ X_2\cup\{u_1\} $ containing $ u_1 $.  The case $ u_2\notin B $ would imply that $ (A,B) $ {is an $ (a,b) $-feasible pair of $ G $}. Therefore, we have $$ b(u_2)+h(u_2)\le d_{X_2\cup\{u_1\}}(u_2)=d_{X_2}(u_2)+e(u_1,u_2)\le b(u_2)+h(u_2)-1+1=b(u_2)+h(u_2),$$ along with $ d_{X_2}(u_2)=b(u_2)+h(u_2)-1 $. Hence, $ d_{X_1}(u_2)=a(u_2)+1 $ and $ d_{X_1'}(u_2)=d_{X_1\setminus\{u_1\}}(u_2)=a(u_2) $, violating $ u_2\in A $. 
 
 In conclusion, $ (X_1',X_2') $ is a degenerate partition of $ V(G) $. Since $u_i \in B_i(X_i)$ and either $u_1, u_2 \in T_0$ or $u_1 \in T_1$, inequality~\eqref{25a} yields
  $$ \omega(X_1',X_2')- \omega(X_1,X_2)\ge2-h(u_2)+h(u_1)-2e(u_1,u_2)=h(u_1)-h(u_2)\ge 0.$$   By the maximality of $ \omega(X_1,X_2) $, we have  $ \omega(X_1,X_2)= \omega(X_1',X_2')  $, where  $u_1\in T_1$ implies $u_2\in T_1$.
	 \end{proof}

\begin{claim}\label{29c}
	Let $ (X_1,X_2)\in\mathcal{P} $. If $u\in B_1(X_1)$ and $v\in B_2(X_2)$, then  $ d_{X_1}(u)=a(u) $ and $ d_{X_2}(v)=b(v)+h(v)-1 $.
\end{claim}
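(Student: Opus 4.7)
The plan is to consider the natural exchange of $u$ and $v$ and extract sharp information from it. Set $X_1' = (X_1 \setminus \{u\}) \cup \{v\}$ and $X_2' = (X_2 \setminus \{v\}) \cup \{u\}$. I would first observe that the portion of the proof of Claim~\ref{25c} establishing degeneracy relies only on the $i$-meagerness of $X_i$ and on Claim~\ref{26c} (used to produce a $2$-good subset of $X_2 \cup \{u_1\}$ containing $u_1$ and a $1$-good subset of $X_1 \cup \{u_2\}$ containing $u_2$)---it makes no use whatsoever of the $T_0/T_1$ hypothesis imposed there. Consequently, that argument applied verbatim with $(u_1,u_2) = (u,v)$ shows that $(X_1', X_2')$ is a degenerate partition of $V(G)$.

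With degeneracy in hand, I would introduce the slack parameters $k = a(u) - d_{X_1}(u) \ge 0$ and $l = (b(v) + h(v) - 1) - d_{X_2}(v) \ge 0$; the claim is equivalent to $k = l = 0$. Using Claim~\ref{21c} to write $d_{X_2}(u) = b(u) + h(u) + k$ and $d_{X_1}(v) = a(v) + 1 + l$, and Claim~\ref{24c} to set $e(u,v) = 1$, a direct substitution into the identity underlying~\eqref{25a} (retaining the slacks rather than bounding them away) yields
\[
\omega(X_1', X_2') - \omega(X_1, X_2) = h(u) - h(v) + 2k + 2l.
\]
The maximality of $\omega$ in $\mathcal{P}$ applied to the degenerate partition $(X_1', X_2')$ then forces the left-hand side to be $\le 0$. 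Since $h(u), h(v) \in \{0, 1\}$, one has $h(u) - h(v) \ge -1$, whence $2(k+l) \le 1$; as $k$ and $l$ are non-negative integers, this forces $k = l = 0$, which is the claim.

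The one step demanding any care is the decoupling of degeneracy from the $T_0/T_1$ part of Claim~\ref{25c}: one has to confirm that the proof of degeneracy there is genuinely independent of the $T_0/T_1$ hypothesis (it is---only the subsequent application of maximality of $\omega$ in Claim~\ref{25c} invokes that hypothesis, through inequality~\eqref{25a}, and here we use maximality only to obtain a one-sided bound, not to place $(X_1', X_2')$ in $\mathcal{P}$). Once this point is made, the remainder of the argument is a short algebraic check combined with a single application of the maximality criterion that defines $\mathcal{P}$.
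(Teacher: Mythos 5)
Your argument is correct, but it takes a genuinely different route from the paper. The paper does not swap $u$ and $v$: it first shows that $X_1$ is $a$-nice (if some $x\in X_1$ had $d_{X_1}(x)\le a(x)-1$, shifting $x$ to $X_2$ raises $\omega$ by at least $2$, so the new partition is not degenerate, producing a $2$-good set $B\ni x$ disjoint from an $a$-nice set $A$ coming from Claim~\ref{26c} applied to $v$, i.e.\ an $(a,b)$-feasible pair), which gives $d_{X_1}(u)=a(u)$; it then obtains $d_{X_2}(v)=b(v)+h(v)-1$ from Claim~\ref{21c} together with a $2$-good subset of $X_2\cup\{u\}$ containing $u$, which by the degree count must contain all of $N_{X_2}(u)$, hence (via Claim~\ref{24c}) all of $B_2(X_2)$. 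Your route instead performs the exchange, reuses the degeneracy portion of the proof of Claim~\ref{25c}, and then reads off both equalities at once from the exact identity $\omega(X_1',X_2')-\omega(X_1,X_2)=h(u)-h(v)+2k+2l$ together with $\omega$-maximality and $h(u)-h(v)\ge -1$. Your key observation --- that the meagerness part of Claim~\ref{25c} never uses the $T_0/T_1$ hypothesis (which enters only through the weight inequality~\eqref{25a}) --- is accurate; it does also use Claim~\ref{21c} and the non-existence of feasible pairs, but these are standing facts, so the reuse is legitimate. What your version buys is brevity and a single uniform maximality argument (it even handles the a priori impossible case $h(u)=1$, $h(v)=0$ without case analysis); its cost is that it leans on an argument embedded inside another claim's proof rather than on a stated lemma, so for a self-contained write-up you would want to extract that degeneracy statement as a separate claim. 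The paper's version, by contrast, stays entirely at the level of the stated Claims~\ref{21c}--\ref{26c} and explicitly exhibits the feasible pairs that would otherwise arise.
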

\begin{proof}
 Let $ X_1'=X_1\cup\{v\} $ and $ X_2'=X_2\setminus\{v\} $, where $v\in B_2(X_2)$.   By Claim \ref{28c}, we have  $|X_2|\ge 2$ and $|X_1|\ge 2$. By Claim \ref{26c},  there exists $ A\subseteq X_1 $ such that $ A\cup\{v\} $ is $ 1 $-good,  which implies that $ A $ is $a$-nice.

  We prove that $X_1$ is $a$-nice. Otherwise, there exists a vertex $ x\in X_1\setminus A $ such that $ d_{X_1}(x)\le a(x)-1 $. Let $ X_1''=X_1\setminus\{x\}  $ and $ X_2''=X_2\cup\{x\} $. By employing \eqref{23a}, we find   \begin{align*}
		\omega(X_1'',X_2'')-\omega(X_1,X_2)&=d_{X_2}(x)-d_{X_1}(x)+a(x)-b(x)\nonumber\\&\ge (b(x)+h(x)+1)-(a(x)-1)+a(x)-b(x)\\&=2+h(x) \ge2.
	\end{align*}  Since $X''_1$ is 1-meager, there exists a $ 2 $-good subset $ B\subseteq X_2'' $ such that $ x\in B $. Observing that $ x\notin A $, we deduce that $ (A,B) $ is an $ (a,b) $-feasible pair. Hence, $X_1$ is   $a$-nice, and so $ d_{X_1}(u)=a(u)$.

 Claim \ref{21c}, together with $ d_{X_1}(u)=a(u)$, yields $d_{X_2}(u)=b(u)+h(u)$. Since  $u\in B$ (by Claim \ref{26c}), we deduce that $v\in B$ for every $v\in B_2(X_2)$ (by Claim \ref{24c}), which implies  $ d_{X_2}(v)=b(v)+h(v)-1 $. \end{proof}

Collecting all preceding observations, we arrive at the final step and thus complete the proof.

\medskip\noindent{\textit{Proof of Theorem~\ref{13t}.}} If $B_2(X_2)\subseteq T_1$, then $ (X_1,X_2) $ is an $ (a,b) $-feasible partition by Claim \ref{29c}. Therefore,  $B_2(X_2)\cap T_0\ne \emptyset$. In what follows, we always suppose that $$v_0\in B_2(X_2)\cap T_0~~\text{and}~~u_0\in B_1(X_1).$$

If  $u_0\in B_1(X_1)\cap T_1$, then by Claim \ref{25c} we have  $(X_1\setminus \{u_0\}\cup \{v_0\}, X_2\setminus \{v_0\}\cup \{u_0\})\in  \mathcal{P}$ and $v_0\in T_1$, contradicting $v_0\in B_2(X_2)\cap T_0$.   Therefore,  $B_1(X_1)\subseteq T_0$.

If $B_1(X_1)\cap N_{X_1}(u_0)=\emptyset$, then there exists  $ x\in (X_1\setminus B_1(X_1))\cap N_{X_1}(u_0)$ such that   $ d_{X_1}(x)\ge a(x)+1 $, as $d_{X_1}(u_0)=a(u_0)\ge 1$. Since $ N_{X_1}(u_0)\subseteq X_1\setminus B_1(X_1)$,  $ X_1\setminus\{u_0\} $ is $ a $-nice by Claim \ref{29c}. Let $ X_1'=X_1\setminus \{u_0\} $ and $ X_2'=X_2\cup \{u_0\} $. By Claim \ref{26c}, there exists a $ 2 $-good subset $ B\subseteq X_2' $ such that $ u_0\in B $. Therefore, $ (X_1',B) $ is an $ (a,b) $-feasible pair. Therefore, $B_1(X_1)\cap N_{X_1}(u_0)\ne \emptyset$.

Gathering the previous conclusions, we arrive at the following setting: \begin{align}\label{liu1e}B_2(X_2)\cap T_0\ne \emptyset,~~u_0\in B_1(X_1)\subseteq T_0 ~~\text{and}  ~~B_1(X_1)\cap N_{X_1}(u_0)\ne \emptyset.\end{align}

We now consider separately the two cases arising from parts (i) and (ii) of the theorem.

\medskip\noindent{$\bullet$ \textit{Case 1: $H$ is  obtained   by connecting a single vertex to exactly two vertices of $ K_{4}-e$.}}

By \eqref{liu1e}, we have   $1\le  \left| N_{X_1}(u_0)\cap B_1(X_1)\right|\le 2$, as $B_1(X_1)\subseteq T_0$. We choose $y_0\in N_{X_1}(u_0)\cap B_1(X_1)$.  By the symmetry of $u_0$ and $y_0$, we have $1\le  \left| N_{X_1}(y_0)\cap B_1(X_1)\right|\le 2$.

To facilitate reading, we briefly outline the remainder of this part of the proof:
\begin{itemize}
	\item[(a)] we first establish  the forthcoming equalities~\eqref{eq:cl},
	\item[(b)] we then consider the case $X_1 = \{u_0, y_0\}$,
	\item[(c)] and finally the case $X_1 \neq \{u_0, y_0\}$,
\end{itemize}
showing in each instance that an appropriate $(a,b)$-feasible pair exists, thereby eliminating the case.

We proceed with (a), i.e., we claim that  \begin{equation}\label{eq:cl}|N_{X_1}(y_0)\cap B_1(X_1)|=|N_{X_1}(u_0)\cap B_1(X_1)|=1.\end{equation} Otherwise, assume that   $\max\{|N_{X_1}(y_0)\cap B_1(X_1)|, |N_{X_1}(u_0)\cap B_1(X_1)|\}= 2$. Without loss of generality, suppose that   $ N_{X_1}(y_0)\cap B_1(X_1)=\{u_0,w_0\} $. Since $B_1(X_1)\subseteq T_0$, we have   $B_2(X_2)=\{v_0\}$.

Assuming that $X_1=\{w_0,y_0,u_0\}$, we let $X'_1=\{w_0,y_0,v_0,u_0\}$ and $X'_2=V(G)\setminus X'_1$.  Every vertex $y\in X_2\setminus \{v_0\}$ is adjacent to at most one vertex of $\{w_0,y_0,v_0,u_0\}$, as $u_0\in T_0$. This implies $$d_{X'_2}(y)\ge a(y)+b(y)+h(y)-1\ge b(y)+1+h(y)-1=b(y)+h(y).$$ By Claim \ref{26c}, there {exists} a $ 1 $-good subset of $ X_1' $. Thus, {$(X'_1, X'_2)$ contains an $  (a, b) $-feasible pair}. Thus, $X_1\ne \{w_0,y_0,u_0\}$. Let $x\in X_1\setminus \{w_0,y_0,u_0\}$. If $x\in B_1(X_1)$, then $N_G(x)\cap \{w_0,y_0,u_0\}=\emptyset$, as $u_0\in T_0$.
If $x\not\in B_1(X_1)$, then $|N_G(x)\cap \{w_0,y_0,u_0\}|\le 1$ and $d_{X_1}(x)\ge a(x)+1$, as $u_0\in T_0$. Let $X''_1=X_1\setminus \{w_0,y_0,u_0\}$ and $X''_2=X_2\cup \{w_0,y_0,u_0\}$. Irrespective of whether $x\in B_1(X_1)$ or not, we have $d_{X''_1}(x)\ge a(x)$. By Claim \ref{26c}, there {exists} a $ 2 $-good subset of $ X_2'' $. Thus, {$(X''_1, X''_2)$ contains  an $(a, b) $-feasible pair}. This confirms our claim.

With \eqref{eq:cl}, we first consider the case (b):  $ X_1=\{u_0,y_0\} $. Note that $ \left| V(G)\right|\ge 5  $, which shows $ \left| X_2\right|\ge 3$. If $ \left| N_G(v)\cap \{v_0,y_0,u_0\}\right|\le1 $ for every $ v\in X_2\setminus\{v_0\} $, then we set $ X_1'=\{v_0,y_0,u_0\} $ and $ X_2'=X_2\setminus\{v_0\} $. In this setting, $ X_1' $ contains an $ a $-nice set by Claim \ref{26c}, which in light of Claim~\ref{21c} implies $$ d_{X_2'}(v)\ge d_G(v)-d_{X_1'}(v)\ge a(v)+b(v)+h(v)-1\ge  1+b(v)+h(v)-1= b(v)+h(v),$$  for  $ v\in X_2' $. Hence, {$ (X_1',X_2') $   contains  an $(a, b) $-feasible pair}.
 
  Hence, there {exists} a vertex $ v\in X_2\setminus\{v_0\} $ such that $ \left| N_G(v)\cap \{v_0,y_0,u_0\}\right|\ge2 $. Since $u_0\in T_0$ by \eqref{liu1e},   there is exactly one vertex, say $v_1 $, in $ X_2\setminus\{v_0\} $ such that $ \left| N_G(v_1)\cap \{v_0,y_0,u_0\}\right|\ge 2 $ and $ \left| N(v_2)\cap \{v_0,v_1,y_0,u_0\}\right|\le1 $ holds for every $v_2\in X_2\setminus\{v_0,v_1\} $. Since  $$ d_{X_2\setminus\{v_0,v_1\}}(w)=d_G(w)-d_{X_1\cup\{v_0,v_1\}}(w)\ge a(w)+b(w)+h(w)-1\ge 1+b(w)+h(w)-1= b(w)+h(w)$$ for  $ w\in X_2\setminus\{v_0,v_1\} $,  Claim~\ref{26c} ensures that {$ (\{ u_0,y_0,v_0,v_1\}, X_2\setminus\{v_0,v_1\}) $ contains an $ (a,b) $-feasible pair}.

 We now consider the case (c) in which $\{u_0, y_0\}$ is a proper subset of $X_1$, and set $x\in X_1\setminus\{u_0,y_0\}$. We claim that there exists $x_0\in X_1\setminus\{u_0,y_0\}$ such that $x_0\not\in B_1(X_1)$ and $ \left| N(x_0)\cap \{u_0,y_0\}\right|=2$. Otherwise, assume that $ \left| N(x)\cap \{u_0,y_0\}\right|\le 1$ holds for every $x\in X_1\setminus B_1(X_1)$. If $y\in B_1(X_1)$, then $ \left| N(y)\cap \{u_0,y_0\}\right|=0$, as $|N_{X_1}(y_0)\cap B_1(X_1)|=|N_{X_1}(u_0)\cap B_1(X_1)|=1$. Combining this with the previous inequality, we deduce that $X_1\setminus\{u_0,y_0\} $ and $X_2\cup\{u_0,y_0\}$ create an  $ (a,b) $-feasible pair, by Claim \ref{26c}. This confirms the claim. 
 
 Moreover, since  $u_0\in T_0$,  $x_0$ is the unique vertex of   $ X_1\setminus B_1(X_1) $ such that  $ \left| N(x_0)\cap \{u_0,y_0\}\right|=2$. The possibility $ d_{X_1}(x_0)\ge a(x_0)+2 $ is eliminated by the existence of an $(a, b)$-feasible pair $ (X_1\setminus\{u_0,y_0\}, X_2\cup\{u_0,y_0\}) $. Hence, $d_{X_1}(x_0)=a(x_0)+1 $.

If $ X_1\setminus\{u_0,y_0,x_0\}\neq\emptyset $, then  $ \left|N(v)\cap\{u_0,y_0,x_0\} \right|\le 1 $ for  $ v\in X_1\setminus\{u_0,y_0,x_0\}$, as $u_0\in T_0$. In addition $u_0\in T_0$ implies $N_G(x_0)\cap B_1(X_1)=\{u_0,y_0\}$.  Hence, $ X_1\setminus\{u_0,y_0,x_0\} $ is $ a $-nice, and Claim~\ref{26c} ensures that  {$(X_1\setminus\{u_0,y_0,x_0\}, X_2\cup \{u_0,y_0,x_0\})$ contains an $(a,b)$-feasible pair}. Thus,  $ X_1=\{u_0,y_0,x_0\} $.

 Since $u_0\in T_0$, we have $B_2(X_2)=\{v_0\}$ and $ \left|N(v)\cap\{v_0,y_0,x_0,u_0\} \right|\le 1 $ holds  for every $ v\in X_2\setminus\{v_0\} $. We set $X'_1=X_1\cup \{v_0\}$ and $X'_2=X_2\setminus \{v_0\}$. In this context $$d_{X'_2}(v)\ge a(v)+b(v)+h(v)-1\ge 1+b(v)+h(v)-1=b(v)+h(v)$$ holds for every $ v\in X'_2$, which, together with Claim~\ref{26c}, implies that  { $ (X'_1,X'_2) $ contains an $ (a,b) $-feasible pair}. This completes the last possibility for $X_1$, thereby finalizes the proof of Case 1.

\medskip\noindent  {$\bullet$ \textit{Case 2: $H_0\cong K_{2,3}$.}}

  If	$ \left|B_1(X_1)\right| =1$, then   $ B_1(X_1)=\{u_0\} $.    For every $ x\in X_1\setminus B_1(X_1) $, since $ d_{X_1}(x)\ge a(x)+1 $,    $ X_1\setminus\{u_0\} $ is $ a $-nice. Combining this with   Claim \ref{26c}, {$ (X_1\setminus\{u_0\}, X_2\cup \{u_0\}) $ contains an $ (a,b) $-feasible pair}. Thus,
$ \left|  B_1(X_1)\right| \ge 2 $.

  Since $u_0\in T_0$ by \eqref{liu1e}, we have   $|  B_2(X_2)|\le 2$ by Claim \ref{24c}. We first  claim that $ \left|  B_2(X_2)\right|=1 $. Otherwise, suppose that $ \left|  B_2(X_2)\right|=2 $. Since $u_0\in T_0$, we also have $ \left|  B_1(X_1)\right|=2.$
Since $d_{X_1}(u_0)=a(u_0)\ge 2-h(u_0)=2$, we have   $X_1\setminus B_1(X_1)\ne \emptyset$. Now, $x\in  X_1\setminus B_1(X_1)$ is adjacent to at most one vertex of $B_1(X_1)$, as   $u_0\in T_0$. This implies that $X_1\setminus B_1(X_1)$ is $a$-nice. By Claim \ref{26c},  $(X_1\setminus B_1(X_1), X_2\cup   B_1(X_1))$ {contains an $(a,b)$-feasible pair}, which confirms the claim.

Therefore, $ \left|  B_2(X_2)\right|=1$, say $ B_2(X_2)=\{v_0\}$.  By \eqref{liu1e}, we also have $v_0\in T_0$.

    We now claim the existence of a vertex $v\in T_0\cap X_2 $ such that $v_0v\in E(G) $ and $d_{X_2}(v)=b(v)$. Let $ X'_1=X_1\cup\{v_0\} $ and $ X'_2=X_2\setminus\{v_0\} $. By Claim \ref{26c},   there exists a $ 1 $-good subset $ A $ of  $ X'_1$ containing $v_0$. Since $G$ does not admit an $(a,b)$-feasible pair, there must {exists} a vertex $ v\in X'_2$ such that $ d_{X'_2}(v)\le b(v)-1 $. Hence, we have \begin{align*}
	b(v)+h(v)\le d_{X_2}(v)=	d_{X'_2}(v)+e(v_0,v)\le b(v)-1+1=b(v),
\end{align*}
as $v\not\in B_2(X_2)$. 
This forces that  $ e(v_0,v)=1 $, $ h(v)=0 $ and $ d_{X_2}(v)=b(v) $, as  desired.

 If
there exists a vertex $x_0$ of $B_1(X_1) $  such that $x_0v\notin E(G)$, then   $(X_1\setminus\{x_0\}\cup\{v_0\}, X_2\setminus\{v_0\}\cup\{x_0\}) \in\mathcal{P} $ by Claim \ref{25c}, as $B_1(X_1)\subseteq T_0$.  Since $ d_{X_2}(v)=b(v)$ and $x_0v\notin E(G)$, we have  $ x_0,v\in B_2(X_2\setminus\{v_0\}\cup\{x_0\})$ by Claim \ref{29c}.  The remainder follows the line of the previous part of the proof eliminating $|B_2(X_2)|\ge 2$. Thus, $v$ is adjacent to every vertex of $B_1(X_1)$. 

  Since $v_0\in T_0$, we have  $$ \left| X_1\right|\ge d_{X_1}(v_0)=a(v_0)+b(v_0)+h(v_0)-(b(v_0)-1)\ge 2-h(v_0)+h(v_0)+1=3.$$  
  Since $u_0\in T_0$ and $v$ is adjacent to every vertex of $B_1(X_1)$, we have $ \left| B_1(X_1)\right|=2$ and every $x\in X_1\setminus B_1(X_1)$ is adjacent to at most one vertex of $B_1(X_1)$. This implies that { $(X_1\setminus B_1(X_1), X_2\cup B_1(X_1))$ contains an $(a,b)$-feasible pair}. This was the final contradiction, completing Case 2 and the entire proof.\qed

\section*{Declaration of competing interest}

	The authors declare that there is no competing interest.

\section*{Data availability}

	No data was used for the research described in the article.
	
	\section*{Acknowledgement} 
	This
	research is supported by the Natural Science Foundation of Guangdong Province 	(No.~2024A1515011899), the National Natural Science Foundation of China (No.~12271182), the Science and Technology Development Fund, Macau SAR (No.~0080/2023/RIA2) and the Ministry of
	Science, Technological Development and Innovation of the Republic of Serbia (No. 451-03-136/2025-03/200104).


\begin{thebibliography}{9999}
\bibitem{Vander}   C. Bazgan, Z. Tuza, D. Vanderpooten, Efficient algorithms for decomposing graphs under degree constraints, \textit{Discrete Appl. Math.}, {155} (2007), 979--988. 
	\bibitem{Diestel}  J.A. Bondy, U.S.R. Murty, {\em Graph Theory},  Springer, New York, 2008.
	
		
		
	
	
	\bibitem{Diwan}   A.A. Diwan, Decomposing graphs with girth at least five under degree constraints, \textit{J. Graph Theory}, {33} (2000), 237--239.
	
	
		\bibitem{Fu} M. Furuya, S. Maezawa, Distribution of vertices required a high-degree condition
	on partitions of graphs under degree constraints, \textit{J. Graph Theory}, {109} (2025),  315--331.
	
		\bibitem{Kobler}   M.U. Gerber, D. Kobler, Classes of graphs that can be partitioned to satisfy all their vertices, \textit{Australas. J. Combin.}, {29} (2004), 201--214.
		
	\bibitem{Hou}   J. Hou, H. Ma, J. Yu, X. Zhang, On partitions of $ K_{2,3}$-free graphs under degree constraints,  \textit{Discrete Math.}, {341} (2018), 3288--3295.
	
	\bibitem{Kaneko}   A. Kaneko, On decomposition of triangle-free graphs under degree constraints, \textit{J. Graph Theory}, {27} (1998), 7--9.
	
	\bibitem{Liu}    M. Liu, B. Xu, On partitions of graphs under degree constraints, \textit{Discrete Appl. Math.}, {226} (2017), 87--93.

	
	\bibitem{Ma} J. Ma, T. Yang, Decomposing $C_4$-free graphs under degree constraints, \textit{J. Graph Theory}, {90} (2019), 13--23.

	
	

		\bibitem{Sti}  M. Stiebitz, Decomposing graphs under degree constraints, \textit{J. Graph Theory}, {23} (1996), 321--324.
	\bibitem{Thomassen}   C. Thomassen, Graph decomposition with constraints on the connectivity and minimum degree,  \textit{J. Graph Theory}, {7} (1983), 165--167.
	
	\bibitem{Zu} Q. Zeng, C. Zu, A generalization of Stiebitz-type results on graph decomposition,   \textit{Electron. J. Combin.}, {28}  (2021), \# P.2.43. 
\end{thebibliography}
 \end{document}